\documentclass[a4paper]{article}

\usepackage{amsmath}
\usepackage{amssymb}
\usepackage{amsthm}
\usepackage{enumitem}
\usepackage{geometry}
\usepackage{tikz-cd}

\theoremstyle{definition}
\newtheorem{dfn}{Definition}[section]
\newtheorem{ex}[dfn]{Example}

\theoremstyle{plain}
\newtheorem{lem}[dfn]{Lemma}
\newtheorem{prop}[dfn]{Proposition}
\newtheorem{thm}[dfn]{Theorem}
\newtheorem{cor}[dfn]{Corollary}
\newtheorem*{thm*}{Theorem}

\theoremstyle{remark}
\newtheorem{rem}[dfn]{Remark}

\numberwithin{equation}{section}

\AtEndDocument{%
  \par
  \medskip
  \begin{tabular}{@{}l@{}}%
    Naoya Hiramae\\
    \textsc{Department of Mathematics, Kyoto University}\\
    Kitashirakawa Oiwake-cho, Sakyo-ku, 606-8502, Kyoto\\
    \textit{E-mail address}: \texttt{hiramae.naoya.58r@st.kyoto-u.ac.jp}
  \end{tabular}
  \par
  \medskip
  \begin{tabular}{@{}l@{}}%
    Yuta Kozakai\\
    \textsc{Department of Mathematics, Tokyo University of Science}\\
    1-3, Kagurazaka, Shinjuku-ku, Tokyo, 162-8601, Japan\\
    \textit{E-mail address}: \texttt{kozakai@rs.tus.ac.jp}
  \end{tabular}
  }
  
\title{$\tau$-Tilting finiteness of group algebras of semidirect products of abelian $p$-groups and abelian $p'$-groups}
\author{Naoya Hiramae \and Yuta Kozakai}
\date{}

\begin{document}

\maketitle

\renewcommand{\thefootnote}{\fnsymbol{footnote}}
\footnote[0]{\emph{Mathematics Subject Classification} (2020). 20C20, 16G10.}
\footnote[0]{\emph{Keywords.} Group algebras, $p$-hyperfocal subgroups, $\tau$-tilting finite algebras.}
\renewcommand{\thefootnote}{\arabic{footnote}}

\begin{abstract}
    Demonet, Iyama and Jasso introduced a new class of finite dimensional algebras, \textit{$\tau$-tilting finite algebras}. It was shown by Eisele, Janssens and Raedschelders that tame blocks of group algebras of finite groups are always $\tau$-tilting finite. Given the classical result that the representation type (representation finite, tame or wild) of blocks is determined by their defect groups, it is natural to ask what kinds of subgroups control $\tau$-tilting finiteness of group algebras or their blocks. In this paper, as a positive answer to this question, we demonstrate that $\tau$-tilting finiteness of a group algebra of a finite group $G$ is controlled by a \textit{$p$-hyperfocal subgroup} of $G$ under some assumptions on $G$.\\
    \indent We consider a group algebra of a finite group $P\rtimes H$ over an algebraically closed field of positive characteristic $p$, where $P$ is an abelian $p$-group and $H$ is an abelian $p'$-group acting on $P$, and show that $p$-hyperfocal subgroups determine $\tau$-tilting finiteness of the group algebras in this case.
\end{abstract}

\section{Introduction}
Throughout this paper, $k$ always denotes an algebraically closed field with characteristic $p>0$ and algebras mean finite dimensional algebras with a ground field $k$. Modules are always left and finitely generated.\\
\indent Demonet, Iyama and Jasso \cite{DIJ} showed that the following conditions on an algebra $\Lambda$ for certain finiteness are equivalent:
\begin{itemize}
    \item The number of isoclasses of basic support $\tau$-tilting $\Lambda$-modules is finite.
    \item The number of isoclasses of indecomposable $\tau$-rigid $\Lambda$-modules is finite.
    \item The number of isoclasses of bricks over $\Lambda$ is finite.
    \item Every torsion class in the module category over $\Lambda$ is functorially finite.
\end{itemize}
Algebras which satisfy one of the above equivalent conditions are said to be $\tau$-tilting finite. To date, many researchers have studied the $\tau$-tilting finite algebras (for example, \cite{Ad, AAC, AH, AHMW, AW, ALS, KK1, KK2, K, MS, MW, Mi, Mo, P, STV, W1, W2, W3, W4, Z}). In the context of the modular representation theory of finite groups, Eisele, Janssens and Raedschelders \cite{EJR} showed the remarkable result that tame blocks of groups algebras of finite groups are $\tau$-tilting finite, which does not hold outside blocks of group algebras: there exist $\tau$-tilting infinite algebras of tame type in general (for example, see \cite[Theorem 6.7]{AAC}). Given the classical result that the representation type of blocks is determined by their defect groups (see Theorem \ref{repblock}), it is natural to wonder whether we can describe conditions for blocks to be $\tau$-tilting finite in terms of their defect groups. However, it is impossible to determine whether given blocks are $\tau$-tilting finite by looking at defect groups alone (see \cite[Corollary 4]{EJR} for example). Therefore, we aim to find out what kinds of subgroups control $\tau$-tilting finiteness of blocks of group algebras of finite groups. We obtain a sufficient condition for a group algebra of a finite group $G$ to be $\tau$-tilting finite in terms of a \textit{$p$-hyperfocal subgroup}, that is, the intersection of a Sylow $p$-subgroup of $G$ and $O^p(G)$:\\

\begin{prop}[See Proposition \ref{tfhyp}]\label{itfhyp}
    Let $R$ be a $p$-hyperfocal subgroup of a finite group $G$. Then a group algebra $kG$ is $\tau$-tilting finite if one of the following holds:
    \begin{enumerate}
        \setlength{\parskip}{0cm}
        \setlength{\itemsep}{0cm}
        \item $R$ is cyclic.
        \item $p=2$ and $R$ is isomorphic to a dihedral, semidihedral or generalized quaternion group.\\
    \end{enumerate}
\end{prop}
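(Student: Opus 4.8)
\emph{Strategy.} Set $N:=O^{p}(G)$, so that $N\trianglelefteq G$, the quotient $G/N$ is a $p$-group, and the Sylow $p$-subgroup $P$ can be chosen with $R=P\cap N$. The plan has two stages: first, show that $\tau$-tilting finiteness of $kG$ depends only on $N$; then analyse $kN$ block by block, invoking the classification of representation types of blocks (Theorem~\ref{repblock}) together with \cite{EJR}.

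\emph{Stage 1: $kG$ is $\tau$-tilting finite if and only if $kN$ is.} This is the crucial step. I would establish it via the brick characterisation of \cite{DIJ}, by setting up a bijection between the set of bricks over $kG$ and the set of $G$-orbits of bricks over $kN$, sending a brick $M$ over $kG$ to the $G$-orbit of an indecomposable summand of $M|_{N}$. The engine is ordinary Clifford theory, which behaves well here because a finite $p$-group $Q$, over a field of characteristic $p$, has $H^{2}(Q,k^{\times})=0$ and $\mathrm{Hom}(Q,k^{\times})=1$ and fixes a nonzero vector in every nonzero $k$-space. If $M$ is a brick over $kG$ and $M_1$ is an indecomposable summand of $M|_{N}$ with inertia group $T$ (so $N\le T\le G$ and $T/N$ is a $p$-group), then the identity $\mathrm{End}_{kG}(M)=k$, combined with these $p$-group facts, forces successively: $M_1$ is a brick over $kN$; $M|_{N}$ is multiplicity-free, so its summands constitute a single $G$-orbit of bricks over $kN$; $M_1$ extends uniquely to a $kT$-module $\widetilde{M_1}$; and $M\cong\mathrm{Ind}_{T}^{G}\widetilde{M_1}$. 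The last point makes the assignment injective, and running the construction backwards makes it surjective. Since $G$-orbits are finite, $kG$ has finitely many bricks precisely when $kN$ does, and \cite{DIJ} then gives the equivalence. (If a reduction of this shape is already available, it may of course simply be quoted.)

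\emph{Stage 2: $kN$ is $\tau$-tilting finite.} Since $G/N$ is a $p$-group, $PN=G$, so $R=P\cap N$ is a Sylow $p$-subgroup of $N$ (of order $|N|_{p}$). Let $b$ be a block of $kN$ with defect group $D$. As $D$ is a $p$-subgroup of $N$, it is $N$-conjugate into $R$, hence $D$ is isomorphic to a subgroup of $R$. In case (a) this makes $D$ cyclic. In case (b), every subgroup of a dihedral, semidihedral or generalized quaternion $2$-group is again cyclic, Klein four, dihedral, semidihedral or generalized quaternion, so $D$ is of one of these five types. By Theorem~\ref{repblock}, $b$ is therefore representation-finite (if $D$ is cyclic) or tame (if $p=2$ and $D$ is Klein four, dihedral, semidihedral or generalized quaternion). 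A representation-finite algebra has only finitely many indecomposable $\tau$-rigid modules, hence is $\tau$-tilting finite by \cite{DIJ}; a tame block is $\tau$-tilting finite by \cite{EJR}. So every block of $kN$ is $\tau$-tilting finite, and since $kN$ is the direct product of its blocks, $kN$ is $\tau$-tilting finite. By Stage 1, $kG$ is $\tau$-tilting finite.

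\emph{The main obstacle.} Stage 2 is a routine dévissage on top of the classical fact that the representation type of a block is governed by its defect group, plus \cite{EJR}. The real content lies in Stage 1 --- that passing from $G$ to $O^{p}(G)$ leaves $\tau$-tilting finiteness unchanged --- where the delicate points are the vanishing of the relevant cohomological obstructions for $p$-groups in characteristic $p$ and the bookkeeping, via Clifford theory, needed to turn this into a bijection of brick sets.
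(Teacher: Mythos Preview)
Your approach is essentially the paper's: reduce from $G$ to $N=O^{p}(G)$, then use Theorem~\ref{repblock} and \cite{EJR} block by block. The paper simply quotes \cite[Theorem~3.10]{KK3} for Stage~1, which you correctly anticipated might already be in the literature; your Clifford-theoretic sketch recovers the needed direction. One caveat on your stronger bijection claim: the forward map (a $kG$-brick $M$ goes to the $G$-orbit of a summand of $M|_{N}$) is indeed well-defined and injective, and that is all the proposition requires. But surjectivity---that $\mathrm{Ind}_{T}^{G}\widetilde{M_{1}}$ is a brick for every $kN$-brick $M_{1}$---would need $\mathrm{Hom}_{kN}(M_{1},{}^{g}M_{1})=0$ for $g\notin T$, since any nonzero such map produces a nonzero $G/N$-fixed nilpotent in $\mathrm{End}_{kN}$ of the induced module; this is not automatic. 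So your ``if and only if'' in Stage~1 overshoots what you have established, though it does not affect the proof of the proposition itself.
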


\noindent We conjecture that the converse of Proposition \ref{itfhyp} is also true. In this paper, we see that the conjecture holds in a special case.\\
\indent We consider $\tau$-tilting finiteness of a group algebra of a group $G:=P\rtimes H$, where $P$ is a $p$-group and $H$ is an abelian $p'$-group acting on $P$. When $P$ is abelian, the quiver and relations of the group algebra $kG$ are completely known as in Proposition \ref{abelquiv} (see \cite{BKL1} and \cite{BKL2} for more general results). We show that $\tau$-tilting finiteness of $kG$ can be reduced to the case $C_P(H)=1$ by replacing $P$ with $[P,H]$ (see Corollary \ref{red}), where $C_P(H)$ denotes the centralizer of $H$ in $P$. Moreover, it turns out that $[P,H]$ is the same as the $p$-hyperfocal subgroup of $G$ (see Lemma \ref{hyp}). By finding a \textit{zigzag cycle} (see Definition $\ref{zigdfn}$) in the quiver of $kG$ in the reduced case, we show that the $p$-hyperfocal subgroup of $G$ determines $\tau$-tilting finiteness of $kG$ as follows:\\

\begin{thm}[See Theorem \ref{mainthm}]
    Let $P$ be an abelian $p$-group, $H$ an abelian $p'$-subgroup acting on $P$ and $G:=P\rtimes H$. Denote by $R$ the $p$-hyperfocal subgroup of $G$. Then $kG$ is $\tau$-tilting finite if and only if one of the following holds:
    \begin{enumerate}
        \setlength{\parskip}{0cm}
        \setlength{\itemsep}{0cm}
        \item $p=2$ and $R$ is trivial or isomorphic to the Klein four group.
        \item $p\geq3$ and $R$ is trivial or cyclic.
    \end{enumerate}
\end{thm}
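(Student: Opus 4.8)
The plan is to obtain the ``if'' direction directly from Proposition~\ref{itfhyp}, and to prove the ``only if'' direction in contrapositive form by producing a zigzag cycle in the quiver of $kG$ once $R$ is large enough.

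For ``if'': a trivial or cyclic $R$ is in particular cyclic, so Proposition~\ref{itfhyp}(a) applies; and for $p=2$ the Klein four group is a dihedral group of order $4$, so Proposition~\ref{itfhyp}(b) applies. (If one prefers not to regard $V_4$ as dihedral, one may instead note that, by Corollary~\ref{red}, we may assume $C_P(H)=1$, whence $P\cong R\cong V_4$; since $\operatorname{Aut}(V_4)\cong S_3$ has trivial Sylow $2$-subgroup, the abelian $p'$-group $H$ acts on $P$ through a group of order $1$ or $3$, and $C_P(H)=1$ rules out the trivial action, so $H\cong K\times C_3$ with $K$ acting trivially and $C_3$ faithfully; then $G\cong A_4\times K$ and $kG$ is a finite product of copies of $kA_4$, which is $\tau$-tilting finite because it is a tame block~\cite{EJR}.)

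For ``only if'', suppose $R$ is noncyclic when $p\ge 3$, and noncyclic and not isomorphic to $V_4$ when $p=2$; I want $kG$ to be $\tau$-tilting infinite. First, by Corollary~\ref{red} and Lemma~\ref{hyp} I would replace $P$ by $[P,H]=R$ and reduce to the situation $P=R$, i.e.\ $C_P(H)=1$ and $G=R\rtimes H$. Since the $H$-action is coprime and $R$ abelian, $C_{R/N}(H)=1$ for every $H$-stable $N\trianglelefteq R$, so $k(G/N)=k((R/N)\rtimes H)$ is again of this reduced shape; as it is an algebra quotient of $kG$ and $\tau$-tilting finiteness passes to quotients, it suffices to find one $H$-stable $N$ with $k((R/N)\rtimes H)$ $\tau$-tilting infinite. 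By passing to the Frattini quotient $R/\Phi(R)$ (and, when $p=2$ and $R$ has rank $2$, to $R/\mho^2(R)$ instead), and then choosing a suitable $H$-stable submodule of the resulting semisimple $\mathbb{F}_pH$-module, one reduces to a short list of ``minimal'' configurations: for $p\ge 3$, to $R\cong C_p\times C_p$ or to an elementary abelian $R$ on which $H$ acts irreducibly, always with $H$ fixed-point-free; and for $p=2$, to an elementary abelian $R$ of rank $\ge 3$ (necessarily with $H$ acting irreducibly when the rank is $3$), or to $R\cong C_4\times C_2$ or $R\cong C_4\times C_4$, again with $H$ fixed-point-free.

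In each such configuration, Proposition~\ref{abelquiv} makes the bound quiver of $k(R\rtimes H)$ explicit: its vertices are the characters of $H$, its arrows form a Cayley-graph pattern determined by the $H$-action on $R/\Phi(R)$, and its relations are the monomial relations of length the orders of the chosen generators of $R$ together with the commutativity relations forced by $R$ being abelian. The main task --- and the main obstacle --- is then to exhibit a zigzag cycle (Definition~\ref{zigdfn}) in this bound quiver and to check that it is not killed by the relations; the criterion attached to that definition then yields that $k(R\rtimes H)$, hence $kG$, is $\tau$-tilting infinite. The crux is the interaction of the hypothesis on $R$ with the two families of relations: a cyclic $R$ gives too few arrow types to turn among, and, for $p=2$, $R\cong V_4$ imposes the tight relations $x^2=y^2=0$ that collapse any candidate walk, so no zigzag cycle exists in these ``good'' cases; whereas enlarging the rank (more arrow types, hence room to close up an alternating walk that cannot be filled in by the commutativity relations) or, for $p=2$, enlarging the exponent to at least $4$ (so that monochromatic paths of length up to $3$ survive) supplies exactly the slack needed to build one. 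Carrying this out uniformly over the (infinitely many) minimal configurations --- in particular over the elementary abelian $R$ on which $H$ acts irreducibly, where $H$ is cyclic and the quiver is a large circulant graph --- is where the real work lies.
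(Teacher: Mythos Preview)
Your sketch identifies the right tools (the reduction to $C_P(H)=1$, the bound-quiver presentation of Proposition~\ref{abelquiv}, and the zigzag criterion), but the ``only if'' direction contains a genuine gap and a wrong turn.

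First, the gap: you never actually construct a single zigzag cycle. You reduce to a list of ``minimal configurations'' and then say that producing the cycles ``is where the real work lies.'' That real work is the proof. The paper does not defer it; it builds the cycles explicitly and uniformly, without any preliminary reduction to minimal configurations. For $p\ge 3$ and $\operatorname{rank}P\ge 2$ the construction is short: pick any two distinct pairs $(i,j),(i',j')\in\Gamma$ and walk the zigzag $\alpha_{ij},\alpha_{i'j'},\alpha_{ij},\alpha_{i'j'},\dots$ starting at any $\lambda$. Finiteness of $\operatorname{Irr}H$ forces a first return, giving a zigzag cycle with distinct vertices of length $\ge 2$ (Lemma~\ref{noloop} forbids loops); since $p\ge 3$ the length-$2$ path $\alpha_{ij}\alpha_{ij}$ is not one of the relations of Proposition~\ref{abelquiv}, so Proposition~\ref{zig} applies even when the cycle has odd length. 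No case analysis on $R$ is needed. For $p=2$ and some $t_i\ge 2$ with $i\ge 2$ the same walk works (now $\alpha_{ij}^{2^i}$ has length $\ge 4$). The genuinely new idea, which your sketch does not mention, is the remaining $p=2$ elementary abelian case: here the paper exploits that the $kH$-module $M$ is defined over $\mathbb{F}_2$, so the Frobenius $a\mapsto a^2$ permutes the characters $\chi_{1j}$ via $\chi\mapsto\chi^{\otimes 2}$. Analyzing the sizes of these Frobenius orbits (never $1$ by Lemma~\ref{noloop}) lets one write down explicit even zigzag cycles of length $4$ or $6$ using powers of a single $\chi$ (or two $\chi$'s when every orbit has size $2$).

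Second, the wrong turn: your reduction-to-minimal-configurations both complicates matters and is internally inconsistent. The configuration $R\cong C_4\times C_2$ you list for $p=2$ cannot occur with $C_R(H)=1$: by Lemma~\ref{p2} every homocyclic block has $t_i\neq 1$, so a rank-$2$ abelian $2$-group with fixed-point-free abelian $2'$-action is necessarily $C_{2^i}\times C_{2^i}$. More seriously, your quotient strategy can overshoot: passing to an $H$-stable quotient of an elementary abelian $2$-group may well land you on $V_4$, which is $\tau$-tilting \emph{finite} and therefore useless for the contrapositive. You would then need to argue that one can always choose $N$ to avoid this, which drags in the $\mathbb{F}_2H$-module structure of $R/\Phi(R)$ (whose simple summands over $\mathbb{F}_2$ have dimension $\ge 2$, not $1$) and leaves you with infinitely many irreducible configurations anyway---precisely what the paper's Frobenius-orbit argument handles in one stroke.
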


\vspace{3mm}

\noindent We should remark that $R$ cannot be cyclic when $p=2$ (see Lemma \ref{p2}) and that the Klein four group is exactly the dihedral group of order $4$, which verifies that the converse of Proposition \ref{itfhyp} holds in this case.\\
\indent When $P$ is not abelian, we cannot use the above reduction. But assuming that $C_P(H)$ is contained in the Frattini subgroup $\Phi(P)$ of $P$, we can determine $\tau$-tilting finiteness of $kG$ by the $p$-hyperfocal subgroup of $G$, except in the case where $p=2$ and $P$ has rank $2$ (in this case, $kG$ can be $\tau$-tilting finite or infinite as in Remark \ref{p2rank2}).\\

\begin{thm}[See Corollary \ref{maincor}]
    Let $P$ be a $p$-group, $H$ an abelian $p'$-subgroup acting on $P$, $G:=P\rtimes H$ and $R$ the $p$-hyperfocal subgroup of $G$. Assume that $C_P(H)\leq \Phi(P)$. Then the following hold:
    \begin{enumerate}
        \setlength{\parskip}{0cm}
        \setlength{\itemsep}{0cm}
        \item When $p=2$ and $P$ has rank $\neq2$, $kG$ is $\tau$-tilting finite if and only if $R$ is trivial.
        \item When $p\geq3$, $kG$ is $\tau$-tilting finite if and only if $R$ has rank $\leq1$.
    \end{enumerate}
\end{thm}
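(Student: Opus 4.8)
The plan is to use the hypothesis $C_P(H)\le\Phi(P)$ to collapse the problem onto the Frattini quotient of $P$ and then apply Theorem \ref{mainthm} and Proposition \ref{itfhyp}. First I would note that the $p$-hyperfocal subgroup $O^p(G)\cap P$ of $G=P\rtimes H$ equals $[P,H]$ --- the computation of Lemma \ref{hyp} does not need $P$ abelian --- so $R=[P,H]$. Since $H$ is a $p'$-group acting on the $p$-group $P$, coprime action gives $P=[P,H]\,C_P(H)$, hence the hypothesis yields $P=R\,\Phi(P)$; as $\Phi(P)$ consists of non-generators this forces $R=P$. In particular $R=1$ iff $P=1$, and when $P=1$ the algebra $kG\cong kH$ is semisimple, so $\tau$-tilting finite. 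Thus we may assume $P\ne 1$, i.e. $R=P\ne 1$, and the claims reduce to showing: (i) if $p\ge 3$ and $\mathrm{rank}(P)=1$ then $kG$ is $\tau$-tilting finite; (ii) if $p\ge 3$ and $\mathrm{rank}(P)\ge 2$, or if $p=2$ and $\mathrm{rank}(P)\ne 2$, then $kG$ is $\tau$-tilting infinite.

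For (i): a $p$-group of Frattini rank $1$ is cyclic, so $R=P$ is cyclic and Proposition \ref{itfhyp}(a) applies to $G$. For (ii): set $\bar P:=P/\Phi(P)$, an elementary abelian $p$-group with $\mathrm{rank}(\bar P)=\mathrm{rank}(P)$, and $\bar G:=G/\Phi(P)$; since $\Phi(P)$ is characteristic in $P\trianglelefteq G$, $\bar G$ is defined and equals $\bar P\rtimes H$. The surjection $G\twoheadrightarrow\bar G$ induces a surjection $kG\twoheadrightarrow k\bar G$ of $k$-algebras, and as a quotient algebra of a $\tau$-tilting finite algebra is $\tau$-tilting finite \cite{DIJ}, it is enough to show $k\bar G$ is $\tau$-tilting infinite. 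Since $\bar P$ is abelian, Theorem \ref{mainthm} applies to $\bar G$, whose $p$-hyperfocal subgroup is $[\bar P,H]=\overline{[P,H]}=\bar R=\bar P$; hence $k\bar G$ is $\tau$-tilting finite exactly when $\bar P$ is trivial, or $p=2$ and $\bar P$ is the Klein four group, or $p\ge 3$ and $\bar P$ is cyclic. As $\bar P\ne 1$ and $\mathrm{rank}(\bar P)=\mathrm{rank}(P)$, in the cases of (ii) none of these occurs, so $k\bar G$, and therefore $kG$, is $\tau$-tilting infinite.

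Combining (i) and (ii) with the identity $R=P$ (so $\mathrm{rank}(R)=\mathrm{rank}(P)$ and $R=1\iff P=1$) yields (a) and (b). I expect the only delicate point to be the opening identification $R=P$: it is this, forced by the Frattini hypothesis, that lets the genuinely non-abelian situation be handled purely through the surjection onto $\bar G$ and the abelian case --- without it one would need to control $kG$ for arbitrary non-abelian $P$, for which no quiver presentation is available here. I would also note, as a sanity check on (a), that $p=2$ with $\mathrm{rank}(P)=1$ does not arise under the hypothesis (a $p'$-group acts trivially on a cyclic $2$-group, so $[P,H]=P$ fails unless $P=1$; cf. Lemma \ref{p2}), and that $p=2$ with $\mathrm{rank}(P)=2$ is deliberately excluded because $kG$ may then be $\tau$-tilting finite or infinite (Remark \ref{p2rank2}).
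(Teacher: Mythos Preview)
Your proof is correct and follows essentially the same route as the paper: pass to the Frattini quotient $\bar G=(P/\Phi(P))\rtimes H$, apply Theorem~\ref{mainthm} there, and use Proposition~\ref{tfhyp} for the converse direction. The paper splits this into Corollary~\ref{subcor} (stated in terms of $P$) and then Corollary~\ref{maincor} (restated in terms of $R$); your explicit identification $R=P$ via $P=[P,H]\Phi(P)$ is the link between them, which the paper only records afterward in Remark~\ref{p2rank2}, and your computation of the hyperfocal subgroup of $\bar G$ as $\overline{[P,H]}$ is a cosmetic variant of the paper's use of $C_{\bar P}(H)=1$ together with Proposition~\ref{Asch}.
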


\vspace{3mm}

\noindent\textbf{Notation.} Unless otherwise specified, a symbol $\otimes$ means a tensor product over $k$.\\
\indent For an algebra $\Lambda$, we denote the opposite algebra of $\Lambda$ by $\Lambda^{\mathrm{op}}$, the category of $\Lambda$-modules by $\Lambda\textrm{-mod}$, the full subcategory of $\Lambda\textrm{-mod}$ consisting of all projective $\Lambda$-modules by $\Lambda\textrm{-proj}$, the homotopy category of bounded complexes of projective $\Lambda$-modules by $K^b(\Lambda\textrm{-}\mathrm{proj})$, and the derived category of bounded complexes of $\Lambda$-modules by $D^b(\Lambda\textrm{-mod})$.\\
\indent For $M\in\Lambda\textrm{-mod}$, we denote the number of nonisomorphic indecomposable direct summands of $M$ by $|M|$, the $k$-dual of $M$ by $M^*$ and the Auslander-Reiten translate of $M$ by $\tau M$ (for example, see \cite{ARS}). For a complex $T$, we denote a shifted complex by $i$ degrees of $T$ by $T[i]$.\\

\section{Preliminary}

In this section, we first recall the basic materials of $\tau$-tilting theory and $\tau$-tilting finite algebras. In the last subsection, we will introduce a \textit{zigzag cycle}, a sequence of arrows in the Gabriel quiver which induces $\tau$-tilting infiniteness of algebras.\\
\indent Throughout this section, let $\Lambda$ be an algebra.\\

\subsection{$\tau$-Tilting theory}

Adachi, Iyama and Reiten introduced $\tau$-tilting theory and revealed the relationship among support $\tau$-tilting modules, two-term silting complexes and functorially finite torsion classes \cite{AIR}. To date, it has been found that support $\tau$-tilting modules are in bijection with other many objects such as left finite semibricks \cite{A1}, two-term simple-minded collections \cite{KY}, intermediate $t$-structures of length heart \cite{BY}, and more. In this subsection, we only describe the main results in \cite{AIR}.\\

\begin{dfn}
    A module $M\in\Lambda\textrm{-}\mathrm{mod}$ is a \textit{support $\tau$-tilting module} if $M$ satisfies the following conditions:
    \begin{enumerate}
        \setlength{\parskip}{0cm}
        \setlength{\itemsep}{0cm}
        \item $M$ is $\tau$-rigid, that is, $\mathrm{Hom}_\Lambda(M,\tau M)=0$.
        \item There exists $P\in\Lambda\textrm{-}\mathrm{proj}$ such that $\mathrm{Hom}_{\Lambda}(P,M)=0$ and $|P|+|M|=|\Lambda|$.
    \end{enumerate}
    When we specify the projective $\Lambda$-module $P$ in (b), we write a support $\tau$-tilting module $M$ as a pair $(M,P)$.\\
\end{dfn}

\begin{prop}[{\cite[Theorem 2.7]{AIR}}]
    The set of isoclasses of basic support $\tau$-tilting $\Lambda$-modules is a partially ordered set with respect to the following relation:
    $$M\geq M'\Leftrightarrow there\;exists\;a\;surjective\;homomorphism\;from\;a\;direct\;sum\;of\;copies\;of\;M\;to\;M'.$$
\end{prop}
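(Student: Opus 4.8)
The relation $\geq$ is clearly reflexive, via the identity map $M\to M$, and transitive, since composing surjections $M^{\oplus a}\twoheadrightarrow M'$ and $(M')^{\oplus b}\twoheadrightarrow M''$ produces a surjection $M^{\oplus ab}\twoheadrightarrow M''$; so the entire content of the statement is antisymmetry, and my plan is to reduce it to the injectivity of the map sending a support $\tau$-tilting module to an associated torsion class.

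For a module $N$ let $\mathrm{Fac}(N)\subseteq\Lambda\textrm{-mod}$ denote the full subcategory of quotients of finite direct sums of copies of $N$. First I would record that for arbitrary modules $M,M'$ one has the equivalence
$$M\geq M'\iff\mathrm{Fac}(M')\subseteq\mathrm{Fac}(M).$$
Indeed, if $M\geq M'$ then $M'\in\mathrm{Fac}(M)$, and since $\mathrm{Fac}(M)$ is closed under finite direct sums and under quotients this gives $\mathrm{Fac}(M')\subseteq\mathrm{Fac}(M)$; conversely $M'\in\mathrm{Fac}(M')\subseteq\mathrm{Fac}(M)$ produces a surjection from some $M^{\oplus a}$ onto $M'$. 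Consequently $\geq$ is antisymmetric on the isoclasses of basic support $\tau$-tilting modules exactly when $M\mapsto\mathrm{Fac}(M)$ is injective on these isoclasses.

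To prove this injectivity I would reconstruct a basic support $\tau$-tilting module $M$ from $\mathrm{Fac}(M)$ as its Ext-projective part, in three steps. First, $\mathrm{Fac}(M)$ is a torsion class: it is closed under quotients and finite direct sums by construction, and it is closed under extensions because $\mathrm{Ext}^1_\Lambda(M,\mathrm{Fac}(M))=0$, which follows from the $\tau$-rigidity of $M$ together with the Auslander--Reiten formula $\mathrm{Ext}^1_\Lambda(M,N)\cong D\,\overline{\mathrm{Hom}}_\Lambda(N,\tau M)$ and the observation that any $N\in\mathrm{Fac}(M)$ admits an epimorphism from some $M^{\oplus a}$, so that $\mathrm{Hom}_\Lambda(N,\tau M)$ embeds into $\mathrm{Hom}_\Lambda(M,\tau M)^{\oplus a}=0$. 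Second, $\mathrm{Fac}(M)$ is functorially finite. Third, an indecomposable object $X$ of $\mathrm{Fac}(M)$ is Ext-projective in $\mathrm{Fac}(M)$ if and only if $X\in\mathrm{add}(M)$: the ``if'' part is the Ext-vanishing just proved, while the ``only if'' part uses minimal right $\mathrm{add}(M)$-approximations in $\mathrm{Fac}(M)$ together with the fact that the support condition $|M|+|P|=|\Lambda|$ makes the pair $(M,P)$ maximal among $\tau$-rigid pairs, which forces such an $X$ into $\mathrm{add}(M)$. Granting these, $\mathrm{add}(M)$ is the additive closure of the indecomposable Ext-projective objects of $\mathrm{Fac}(M)$; as a basic module is determined by its additive closure, $M$ is recovered from $\mathrm{Fac}(M)$, which yields the injectivity and hence antisymmetry.

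I expect the third step to be the main obstacle: it is precisely the substance of the Adachi--Iyama--Reiten correspondence between basic support $\tau$-tilting modules and functorially finite torsion classes, and it rests on the Auslander--Smal{\o} machinery of Ext-projective objects and approximations in a functorially finite subcategory, together with a careful accounting of the simple modules that fail to lie in $\mathrm{Fac}(M)$, which recovers the projective part $P$ and controls the count $|M|$. Everything else --- reflexivity, transitivity, the equivalence with inclusion of the subcategories $\mathrm{Fac}(-)$, and the first step --- is routine.
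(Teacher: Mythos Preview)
The paper does not give its own proof of this proposition: it is quoted verbatim as a background result from \cite[Theorem 2.7]{AIR}, with no accompanying argument. Your outline is essentially the argument of Adachi--Iyama--Reiten itself, so there is nothing to compare against; your reduction of antisymmetry to the injectivity of $M\mapsto\mathrm{Fac}(M)$, and the recovery of $M$ as the Ext-projective part of the torsion class $\mathrm{Fac}(M)$, is exactly how that reference proceeds. The only place your sketch is thinner than the original is the second step, where ``$\mathrm{Fac}(M)$ is functorially finite'' is asserted without argument; this is a result of Auslander--Smal{\o} (any $\mathrm{Fac}(N)$ for $N$ finitely generated is functorially finite) and is used in \cite{AIR} as well, so it is harmless to cite rather than reprove.
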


\vspace{3mm}

\begin{dfn}
    A complex $T\in K^b(\Lambda\textrm{-}\mathrm{proj})$ is \textit{tilting} (resp. \textit{silting}) if $T$ satisfies the following conditions:
    \begin{enumerate}
        \setlength{\parskip}{0cm}
        \setlength{\itemsep}{0cm}
        \item $T$ is \textit{pretilting} (resp. \textit{presilting}), that is, $\mathrm{Hom}_{K^b(\Lambda\textrm{-}\mathrm{proj})}(T,T[i])=0$ for all integers $i\neq 0$ (resp. $i>0$).
        \item The full subcategory $\mathrm{add}\,T$ of $K^b(\Lambda\textrm{-}\mathrm{proj})$ consisting of all complexes isomorphic to direct sums of direct summands of $T$ generates $K^b(\Lambda\textrm{-}\mathrm{proj})$ as a triangulated category.\\
    \end{enumerate}
\end{dfn}

\begin{prop}[{\cite[Theorem 2.11]{AI}}]
    The set of isoclasses of basic silting complexes in $K^b(\Lambda\textrm{-}\mathrm{proj})$ is a partially ordered set with respect to the following relation:
    $$T\geq T'\Leftrightarrow\mathrm{Hom}_{K^b(\Lambda\textrm{-}\mathrm{proj})}(T,T'[i])=0\;for\;all\;integers\;i>0.$$
\end{prop}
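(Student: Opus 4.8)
The plan is to establish the three poset axioms for the relation $\geq$ on the set of isoclasses of basic silting complexes, working throughout in the Krull--Schmidt triangulated category $\mathcal{K} := K^b(\Lambda\textrm{-}\mathrm{proj})$ and writing $\mathcal{C}_T := \{X \in \mathcal{K} : \mathrm{Hom}_{\mathcal{K}}(T, X[i]) = 0 \text{ for all } i > 0\}$ for a silting complex $T$, so that $T \geq T'$ means exactly $T' \in \mathcal{C}_T$. Reflexivity is immediate, since a silting complex is by definition presilting, i.e.\ $\mathrm{Hom}_{\mathcal{K}}(T, T[i]) = 0$ for all $i > 0$. For antisymmetry I would assume $T \geq T'$ and $T' \geq T$ and first check that $T \oplus T'$ is presilting: expanding $\mathrm{Hom}_{\mathcal{K}}(T \oplus T', (T \oplus T')[i])$ for $i > 0$ into its four summands, the two diagonal terms vanish because $T$ and $T'$ are presilting and the two off-diagonal terms vanish by the assumptions. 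I would then invoke the standard fact (see \cite{AI}) that a presilting complex in $\mathcal{K}$ has at most $|\Lambda|$ pairwise non-isomorphic indecomposable direct summands, with equality precisely for silting complexes. Since $T$ and $T'$ are basic silting they each already realise this maximum, while $\mathrm{add}(T \oplus T')$ also has at most $|\Lambda|$ indecomposables; hence $\mathrm{add}\, T = \mathrm{add}\, T'$, that is, $T \cong T'$.

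The substantive axiom is transitivity, and for it I would use the description of $\mathcal{C}_T$ as a co-aisle. Applying $\mathrm{Hom}_{\mathcal{K}}(T, -)$ to triangles shows at once that $\mathcal{C}_T$ contains $\mathrm{add}\, T$ and is closed under the shift $[1]$, under extensions and under direct summands. The key claim is the converse, namely that $\mathcal{C}_T$ is the \emph{smallest} full subcategory of $\mathcal{K}$ with those closure properties that contains $\mathrm{add}\, T$; equivalently, every $X \in \mathcal{C}_T$ is obtained from objects of $\mathrm{add}\, T$ and their non-negative shifts by finitely many extensions and passage to direct summands. This is precisely the assertion that a silting complex induces a bounded co-$t$-structure on $\mathcal{K}$ with co-aisle $\mathcal{C}_T$ and co-heart $\mathrm{add}\, T$. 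Granting the claim, transitivity is formal: if $T \geq T'$ and $T' \geq T''$, then $T' \in \mathcal{C}_T$, so every non-negative shift of $T'$ lies in the extension- and summand-closed subcategory $\mathcal{C}_T$; applying the claim to $T'$, the smallest such subcategory containing $\mathrm{add}\, T'$ is contained in $\mathcal{C}_T$, and since $T'' \in \mathcal{C}_{T'}$ we conclude $T'' \in \mathcal{C}_T$, i.e.\ $T \geq T''$. In this language $\geq$ is simply reverse inclusion of co-aisles, which makes reflexivity and transitivity transparent and, once one knows that silting complexes correspond bijectively to bounded co-$t$-structures, yields antisymmetry as well.

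I expect the main obstacle to be the key claim, i.e.\ producing, for each $X \in \mathcal{C}_T$, a \emph{finite} resolution of $X$ by $\mathrm{add}\, T$ using only non-negative shifts. Concretely one would build triangles $X \to T^0 \to X^1 \to X[1]$, then $X^1 \to T^1 \to X^2 \to X^1[1]$, and so on, with $T^0 \to T^1 \to \cdots$ arising from minimal left $\mathrm{add}\, T$-approximations, and then argue (a) that the process terminates after finitely many steps and (b) that every $X^j$ again lies in $\mathcal{C}_T$, so that no negative shift is ever introduced. Both points rely on $T$ being genuinely silting --- so that $\mathrm{add}\, T$ generates $\mathcal{K}$ as a triangulated category --- together with the boundedness of the complexes in $K^b(\Lambda\textrm{-}\mathrm{proj})$. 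The other non-formal input, the cardinality bound for presilting complexes, is standard and needs only to be quoted. With these two facts in hand, the poset structure follows as sketched.
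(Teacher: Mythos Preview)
The paper does not give its own proof of this proposition; it is simply quoted from \cite[Theorem 2.11]{AI} and used as background. Your outline is essentially the argument carried out in \cite{AI}: reflexivity is immediate from the presilting condition, antisymmetry follows from the cardinality bound $|T|\leq|\Lambda|$ for presilting objects with equality exactly for silting, and transitivity is obtained by identifying $\mathcal{C}_T$ with the co-aisle of the bounded co-$t$-structure on $\mathcal{K}$ determined by $T$, so that $\geq$ becomes reverse inclusion of co-aisles.

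One small technical remark on your sketch of the key claim: to show that the process terminates, it is cleaner to work with \emph{right} $\mathrm{add}\,T$-approximations rather than left ones. Given $X\in\mathcal{C}_T$, take a right $\mathrm{add}\,T$-approximation $T_0\to X$ and complete to a triangle $X'\to T_0\to X\to X'[1]$; one then checks that $X'\in\mathcal{C}_T[1]$, so the smallest integer $m$ with $\mathrm{Hom}_{\mathcal{K}}(T,X[m])\neq 0$ strictly increases at each step, and boundedness (equivalently, the fact that $T$ generates $\mathcal{K}$) forces termination. With left approximations as you wrote them the invariant (b) is fine, but the terminating measure is less transparent. This is a matter of bookkeeping rather than a genuine gap, and your identification of this step as the crux is exactly right.
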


\vspace{3mm}

\indent We say that a complex $T\in K^b(\Lambda\textrm{-}\mathrm{proj})$ is \textit{two-term} if its $i$-th term $T^i$ vanishes for all $i\neq-1,0$. We denote by $\mathrm{s}\tau\textrm{-}\mathrm{tilt}\,\Lambda$ the set of isoclasses of basic support $\tau$-tilting $\Lambda$-modules and by $2\textrm{-}\mathrm{silt}\,\Lambda$ the set of isoclasses of basic two-term silting complexes in $K^b(\Lambda\textrm{-}\mathrm{proj})$.\\

\begin{thm}[{\cite[Theorem 3.2]{AIR}}]
    There exists an isomorphism as partially ordered sets
    $$
    \begin{tikzcd}[row sep = 1mm]
        \mathrm{s}\tau\textrm{-}\mathrm{tilt}\,\Lambda \rar[leftrightarrow, "\sim"] \dar[phantom, "\rotatebox{90}{$\in$}"] & 2\textrm{-}\mathrm{silt}\,\Lambda, \dar[phantom, "\rotatebox{90}{$\in$}"]\\
        (M,P) \rar[mapsto] & (P^M_1\oplus P\xrightarrow{(f\;0)}P^M_0), \\
        \mathrm{Cok}\,g \rar[mapsfrom] & (P^{-1}\xrightarrow{g}P^0), 
    \end{tikzcd}
    $$
    where $P^M_1\xrightarrow{f}P^M_0\rightarrow M\rightarrow0$ is the minimal projective presentation of $M$.\\
\end{thm}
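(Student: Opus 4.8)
The plan is to write down the two maps explicitly, check that each is well defined, verify that they are mutually inverse, and finally prove that the correspondence and its inverse both preserve the partial orders, so that together they give an isomorphism of posets. The technical engine behind everything is a computation of morphism spaces between two-term complexes in $K^b(\Lambda\textrm{-}\mathrm{proj})$. Concretely, given $M,N\in\Lambda\textrm{-}\mathrm{mod}$ with minimal projective presentations $P^M_1\xrightarrow{f}P^M_0\to M\to0$ and $P^N_1\xrightarrow{g}P^N_0\to N\to0$ and given $P,Q\in\Lambda\textrm{-}\mathrm{proj}$, write $T_{(M,P)}=(P^M_1\oplus P\xrightarrow{(f\;0)}P^M_0)$ and $T_{(N,Q)}$ analogously, in degrees $-1,0$. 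Spelling out chain maps $T_{(M,P)}\to T_{(N,Q)}[1]$ modulo homotopy --- there is no cocycle condition here, only the homotopy relation --- and rewriting the resulting quotient of $\mathrm{Hom}_\Lambda$-spaces through the Nakayama functor $\nu$, using $\tau M=\ker(\nu f)$ and the identity $\mathrm{Hom}_\Lambda(X,\nu R)\cong\mathrm{Hom}_\Lambda(R,X)^*$ for projective $R$, one obtains a bifunctorial isomorphism
$$\mathrm{Hom}_{K^b(\Lambda\textrm{-}\mathrm{proj})}\big(T_{(M,P)},T_{(N,Q)}[1]\big)\;\cong\;\mathrm{Hom}_\Lambda(N,\tau M)^*\;\oplus\;\mathrm{Hom}_\Lambda(P,N),$$
the second summand coming from $\mathrm{Hom}_\Lambda(P,P^N_0)/\mathrm{Hom}_\Lambda(P,\mathrm{im}\,g)\cong\mathrm{Hom}_\Lambda(P,N)$. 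Specialising to $M=N$, $P=Q$ shows that $T_{(M,P)}$ is presilting if and only if $M$ is $\tau$-rigid and $\mathrm{Hom}_\Lambda(P,M)=0$.

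Next I would check that the two assignments are well-defined inverse bijections of sets. If $(M,P)$ is support $\tau$-tilting, then $T_{(M,P)}$ is presilting by the criterion above, and $T_{(M,P)}\cong\bigoplus_iT_{(M_i,0)}\oplus\bigoplus_jP_j[1]$ for the indecomposable summands $M_i$ of $M$ and $P_j$ of $P$; these pieces are pairwise non-isomorphic indecomposables --- one uses the standard fact that $\mathrm{End}_{K^b}(T_{(M_i,0)})\cong\mathrm{End}_\Lambda(M_i)$ for $\tau$-rigid $M_i$, hence local, together with the observation that objects of $K^b$ with non-isomorphic cohomology cannot be isomorphic --- so $|T_{(M,P)}|=|M|+|P|=|\Lambda|$; since a two-term presilting complex with $|\Lambda|$ indecomposable summands is silting (here one invokes that every two-term presilting complex is a direct summand of a two-term silting one), $T_{(M,P)}\in2\textrm{-}\mathrm{silt}\,\Lambda$. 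Conversely any $T\in2\textrm{-}\mathrm{silt}\,\Lambda$ is isomorphic in $K^b$ to a complex $(P^{-1}\xrightarrow{g}P^0)$ whose differential is a radical map; then $P^0\twoheadrightarrow\mathrm{Cok}\,g=:M$ is a projective cover, the projective cover $P^M_1$ of $\mathrm{im}\,g=\Omega M$ splits off as a direct summand of $P^{-1}$ with projective complement $P$, and after a column operation $T\cong T_{(M,P)}$; presilting of $T$ forces, via the criterion, $M$ to be $\tau$-rigid with $\mathrm{Hom}_\Lambda(P,M)=0$, while $|M|+|P|=|T|=|\Lambda|$, so $(M,P)$ is support $\tau$-tilting. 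Unwinding the constructions shows that these assignments invert one another.

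For the order isomorphism I would prove that $(M,P)\ge(M',P')\iff T_{(M,P)}\ge T_{(M',P')}$; together with the bijectivity from the previous step this is exactly the assertion that the map and its inverse are isomorphisms of posets. By definition of the order on $2\textrm{-}\mathrm{silt}\,\Lambda$ the right-hand side means $\mathrm{Hom}_{K^b}(T_{(M,P)},T_{(M',P')}[1])=0$, which by the Hom-formula is equivalent to $\mathrm{Hom}_\Lambda(M',\tau M)=0$ together with $\mathrm{Hom}_\Lambda(P,M')=0$. Now for a support $\tau$-tilting module $(M,P)$ the torsion class $\mathrm{Fac}\,M$ admits the description $\mathrm{Fac}\,M=\{X\in\Lambda\textrm{-}\mathrm{mod}:\mathrm{Hom}_\Lambda(X,\tau M)=0\text{ and }\mathrm{Hom}_\Lambda(P,X)=0\}$ --- the torsion-class characterisation of support $\tau$-tilting modules, resting on a Bongartz-type completion and on the $\mathrm{Ext}$-projectivity of $M$ in $\mathrm{Fac}\,M$. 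Hence the two vanishing conditions hold precisely when $M'\in\mathrm{Fac}\,M$, equivalently when $\mathrm{Fac}\,M\supseteq\mathrm{Fac}\,M'$ (because $\mathrm{Fac}\,M$ is closed under quotients and finite direct sums while $M'$ generates $\mathrm{Fac}\,M'$), equivalently when there is a surjective homomorphism from a direct sum of copies of $M$ onto $M'$, which is the defining relation of $(M,P)\ge(M',P')$.

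The genuinely non-formal ingredients are three: the Nakayama-functor rewriting that produces the $\mathrm{Hom}_\Lambda(N,\tau M)^*$ summand in the Hom-formula; the input that a two-term presilting complex is a direct summand of a two-term silting complex, so that $|\Lambda|$ is the largest possible number of indecomposable summands and is attained exactly by the silting complexes; and the torsion-class description of support $\tau$-tilting modules used when comparing the orders. Everything else is bookkeeping with minimal projective presentations, and I expect the Nakayama-functor identification and the torsion-class description to be where the real work lies.
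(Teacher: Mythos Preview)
The paper does not prove this theorem: it is quoted verbatim as \cite[Theorem 3.2]{AIR} in the preliminary section and used as a black box, with no argument given. So there is no ``paper's own proof'' to compare against.

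Your outline is a faithful sketch of the Adachi--Iyama--Reiten argument. The Hom-formula you write, the completion of two-term presilting complexes to two-term silting ones, and the torsion-class description $\mathrm{Fac}\,M=\{X:\mathrm{Hom}_\Lambda(X,\tau M)=0,\ \mathrm{Hom}_\Lambda(P,X)=0\}$ are precisely the three non-formal inputs that \cite{AIR} isolates, and the bookkeeping you describe around them is correct. One small caveat: the isomorphism $\mathrm{Hom}_{K^b}(T_{(M,P)},T_{(N,Q)}[1])\cong\mathrm{Hom}_\Lambda(N,\tau M)^*\oplus\mathrm{Hom}_\Lambda(P,N)$ as stated requires a bit of care with projective summands of $N$ (the Nakayama rewriting gives $D\overline{\mathrm{Hom}}_\Lambda(N,\tau M)$, the stable Hom, which coincides with $D\mathrm{Hom}_\Lambda(N,\tau M)$ here since $\tau M$ has no nonzero projective summands), but this does not affect the conclusion.
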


\begin{dfn}
    A full subcategory $\mathcal{T}$ of $\Lambda\textrm{-mod}$ is a \textit{torsion class} if it is closed under taking factor modules and extensions, that is, for any short exact sequence $0\rightarrow L\rightarrow M\rightarrow N\rightarrow 0$ in $\Lambda\textrm{-mod}$, the following hold:
    \begin{enumerate}
        \setlength{\parskip}{0cm}
        \setlength{\itemsep}{0cm}
            \item $M\in\mathcal{T}$ implies $N\in\mathcal{T}$.
            \item $L,N\in\mathcal{T}$ implies $M\in\mathcal{T}$.\\
    \end{enumerate}
\end{dfn}

\begin{dfn}
    A full subcategory $\mathcal{A}$ of $\Lambda\textrm{-mod}$ is \textit{functorially finite} if for any $M\in\Lambda\textrm{-}\mathrm{mod}$, there exist $X,\,Y\in\mathcal{A}$ and $f:M\rightarrow X,\,g:Y\rightarrow M\in\Lambda\textrm{-}\mathrm{mod}$ such that $-\circ f:\mathrm{Hom}_\Lambda(X,Z)\rightarrow\mathrm{Hom}_\Lambda(M,Z)$ and $g\circ -:\mathrm{Hom}_\Lambda(Z,Y)\rightarrow\mathrm{Hom}_\Lambda(Z,M)$ are both surjective for any $Z\in\mathcal{A}$.
    $$
    \begin{tikzcd}
        M \ar[rd,"\forall"'] \ar[rr,"f"] &  & X \ar[ld, dotted, "\exists"] &  & Z \ar[ld, dotted, "\exists"'] \ar[rd, "\forall"] & \\
         & Z &  & Y \ar[rr,"g"'] &  & M
    \end{tikzcd}
    $$
\end{dfn}

\vspace{3mm}

We denote by $\textrm{f-tors}\,\Lambda$ the set of functorially finite torsion classes in $\Lambda\textrm{-mod}$.\\

\begin{thm}[{\cite[Theorem 2.7]{AIR}}]
    There exists a bijection
    $$
    \begin{tikzcd}[row sep = 2mm]
        \mathrm{s}\tau\textrm{-}\mathrm{tilt}\,\Lambda \rar[rightarrow, "\sim"] \dar[phantom, "\rotatebox{90}{$\in$}"] & \mathrm{f\textrm{-}tors}\,\Lambda, \dar[phantom, "\rotatebox{90}{$\in$}"]\\
        M \rar[mapsto] & \mathrm{Fac}\,M,
    \end{tikzcd}
    $$
    where $\mathrm{Fac}\,M$ means a full subcategory of $\Lambda\mathrm{\textrm{-}mod}$ consisting of factor modules of a direct sum of copies of $M$.\\
\end{thm}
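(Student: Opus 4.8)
The plan is to produce an explicit inverse to $M\mapsto\mathrm{Fac}\,M$ and to check that the two assignments are mutually inverse; the two workhorses are the Auslander--Reiten formula and Auslander--Smal\o{}'s theory of Ext-projective objects in functorially finite subcategories.

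First I would show the map is well defined, i.e.\ that $\mathrm{Fac}\,M$ is a functorially finite torsion class for every support $\tau$-tilting pair $(M,P)$. Closure under factor modules is clear, so the real point is closure under extensions, and for this I would prove the description
$$\mathrm{Fac}\,M={}^{\perp}(\tau M)\cap\{X\in\Lambda\textrm{-mod}\mid\mathrm{Hom}_\Lambda(P,X)=0\},\qquad{}^{\perp}(\tau M):=\{X\mid\mathrm{Hom}_\Lambda(X,\tau M)=0\}.$$
The inclusion $\subseteq$ is easy: $\mathrm{Hom}_\Lambda(M,\tau M)=0$ forces $\mathrm{Hom}_\Lambda(\mathrm{Fac}\,M,\tau M)=0$, since every object of $\mathrm{Fac}\,M$ is a quotient of a power of $M$, and $\mathrm{Hom}_\Lambda(P,M)=0$ forces $\mathrm{Hom}_\Lambda(P,\mathrm{Fac}\,M)=0$ as $P$ is projective. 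The right-hand side is an intersection of two torsion classes --- the first is closed under quotients and extensions because $\mathrm{Hom}_\Lambda(-,\tau M)$ is left exact, the second even under submodules because $\mathrm{Hom}_\Lambda(P,-)$ is exact --- so the equality makes $\mathrm{Fac}\,M$ a torsion class, and its functorial finiteness is then standard, being generated by the finite set $\mathrm{add}\,M$.

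The reverse inclusion $\supseteq$ is the first genuine obstacle, and it is precisely here that the numerical condition $|M|+|P|=|\Lambda|$ is used. I would reduce to the quotient algebra $\Gamma:=\Lambda/\Lambda e\Lambda$, where $e$ is the idempotent with $\Lambda e\cong P$; there $M$ is a genuine $\tau$-tilting $\Gamma$-module with $|M|=|\Gamma|$, and one is reduced to showing $\mathrm{Fac}\,M={}^{\perp}(\tau_\Gamma M)$ for such $M$. Given $X$ in the right-hand side, I would pass to $X/\mathrm{tr}_M(X)$ and combine a Bongartz-type completion with the fact that a $\tau$-tilting module already attains the maximal number $|\Gamma|$ of pairwise non-isomorphic indecomposable summands available to a $\tau$-rigid module, forcing $X\in\mathrm{Fac}\,M$.

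For the inverse map, given a functorially finite torsion class $\mathcal{T}$ I would set $P(\mathcal{T})$ equal to the basic module that is the direct sum of one copy of each indecomposable Ext-projective object of $\mathcal{T}$ (objects $X\in\mathcal{T}$ with $\mathrm{Ext}^1_\Lambda(X,\mathcal{T})=0$); by Auslander--Smal\o{}, functorial finiteness makes this a finite sum with $\mathrm{Fac}\,P(\mathcal{T})=\mathcal{T}$. That $P(\mathcal{T})$ is support $\tau$-tilting has two ingredients: $\tau$-rigidity follows from the Auslander--Reiten formula, which converts $\mathrm{Ext}^1_\Lambda\bigl(P(\mathcal{T}),\mathrm{Fac}\,P(\mathcal{T})\bigr)=0$ into $\mathrm{Hom}_\Lambda\bigl(\mathrm{Fac}\,P(\mathcal{T}),\tau P(\mathcal{T})\bigr)=0$, hence into $\mathrm{Hom}_\Lambda\bigl(P(\mathcal{T}),\tau P(\mathcal{T})\bigr)=0$; and the existence of a projective $P$ with $\mathrm{Hom}_\Lambda(P,P(\mathcal{T}))=0$ and $|P|+|P(\mathcal{T})|=|\Lambda|$ again reduces to the count of summands of a $\tau$-tilting module over an appropriate quotient $\Lambda/\Lambda e\Lambda$, with $e$ chosen so that $\Lambda e$ carries the simples not lying in $\mathcal{T}$. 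Finally, that the two assignments are mutually inverse is $\mathrm{Fac}\,P(\mathcal{T})=\mathcal{T}$ together with the identification of the Ext-projectives of $\mathrm{Fac}\,M$ with $\mathrm{add}\,M$ for support $\tau$-tilting $M$, one inclusion of which is the $\tau$-rigidity computation above and the other a maximality argument. The crux throughout is the interplay between the homological $\perp$-description of $\mathrm{Fac}\,M$ and the combinatorics of indecomposable summands; once the Auslander--Reiten formula and the existence of enough Ext-projectives in functorially finite torsion classes are in hand, the remainder is formal.
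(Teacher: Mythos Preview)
The paper does not prove this statement at all: it is quoted verbatim as a background result from \cite[Theorem 2.7]{AIR} and is used only as part of the preliminary survey of $\tau$-tilting theory. There is therefore no ``paper's own proof'' to compare your proposal against.

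That said, your outline is essentially the original Adachi--Iyama--Reiten argument, and it is broadly correct. One small point of care: the direction in which you invoke the Auslander--Reiten formula is slightly off. The relevant implication runs the other way: for a module $M$ one has $\mathrm{Hom}_\Lambda(M,\tau M)=0$ if and only if $\mathrm{Ext}^1_\Lambda(M,\mathrm{Fac}\,M)=0$ (this is \cite[Proposition 5.8]{ARS} in the form used by \cite{AIR}). So to show that the Ext-projectives of $\mathrm{Fac}\,M$ coincide with $\mathrm{add}\,M$, you start from $\tau$-rigidity of $M$ to get Ext-projectivity, not the reverse; and conversely, to show $P(\mathcal{T})$ is $\tau$-rigid you first need $P(\mathcal{T})\in\mathcal{T}=\mathrm{Fac}\,P(\mathcal{T})$ so that Ext-projectivity yields $\mathrm{Ext}^1_\Lambda(P(\mathcal{T}),\mathrm{Fac}\,P(\mathcal{T}))=0$, and only then apply the formula. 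This is a bookkeeping issue rather than a gap in the strategy.
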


\subsection{$\tau$-Tilting finite algebras}

Demonet, Iyama and Jasso introduced a new class of algebras, \textit{$\tau$-tilting finite algebras}, and found that $\tau$-tilting finiteness is equivalent to other conditions for certain finiteness: brick finiteness and functorially finiteness of all the torsion classes \cite{DIJ}.\\

\begin{dfn}
    An algebra $\Lambda$ is \textit{$\tau$-tilting finite} if there exist only finitely many basic support $\tau$-tilting modules up to isomorphism, that is, $\#\textrm{s$\tau$-tilt}\,\Lambda<\infty$.\\
\end{dfn}

\begin{dfn}
    A module $M\in\Lambda\textrm{-mod}$ is a \textit{brick} if the endomorphism algebra $\mathrm{End}_\Lambda(M)$ is isomorphic to $k$.\\
\end{dfn}

\begin{thm}[{\cite[Theorems 3.8 and 4.2]{DIJ}}]\label{DIJ}
    For an algebra $\Lambda$, the following are equivalent:
    \begin{enumerate}
        \setlength{\parskip}{0cm}
        \setlength{\itemsep}{0cm}
        \item $\Lambda$ is $\tau$-tilting finite.
        \item The number of isoclasses of bricks in $\Lambda\textrm{-}\mathrm{mod}$ is finite.
        \item Every torsion class in $\Lambda\textrm{-}{\mathrm{mod}}$ is functorially finite.\\
    \end{enumerate}
\end{thm}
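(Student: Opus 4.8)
This is the theorem of Demonet, Iyama and Jasso, so the plan is not to reprove it from scratch but to recall the architecture of their argument and indicate where the work lies. Two inputs organize everything. The first is the mutation theory of support $\tau$-tilting modules underlying the preceding correspondence $\mathrm{s}\tau\textrm{-}\mathrm{tilt}\,\Lambda\xrightarrow{\sim}\mathrm{f\textrm{-}tors}\,\Lambda$, $M\mapsto\mathrm{Fac}\,M$: every basic support $\tau$-tilting module has exactly $|\Lambda|$ neighbours in its Hasse quiver, obtained by exchanging a single indecomposable summand (counting summands of the coprojective part), and the poset $\mathrm{s}\tau\textrm{-}\mathrm{tilt}\,\Lambda$ has a greatest element $\Lambda$ and a least element $0$; hence its Hasse quiver is connected and locally finite of constant valency $|\Lambda|$. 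The second is the \emph{brick labelling}: to each cover relation of torsion classes one attaches a canonical brick, and every brick over $\Lambda$ occurs as the label of some arrow in the Hasse quiver of the lattice of all torsion classes.

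I would first establish $\mathrm{(a)}\Rightarrow\mathrm{(c)}$. The point is to upgrade ``finitely many functorially finite torsion classes'' to ``finitely many torsion classes at all'': Demonet, Iyama and Jasso show, using the mutation structure of functorially finite torsion classes together with a local criterion for functorial finiteness, that once $\mathrm{f\textrm{-}tors}\,\Lambda$ is finite it already exhausts the lattice of all torsion classes, so that every torsion class has the form $\mathrm{Fac}\,M$ and is in particular functorially finite. Granting this, $\mathrm{(a)}\Rightarrow\mathrm{(b)}$ becomes essentially formal: the lattice of torsion classes is now finite, hence its Hasse quiver has finitely many arrows, hence finitely many brick labels, and by surjectivity of the labelling there are only finitely many bricks.

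For the converses I would argue contrapositively. If $\Lambda$ is $\tau$-tilting infinite, then $\mathrm{s}\tau\textrm{-}\mathrm{tilt}\,\Lambda\cong\mathrm{f\textrm{-}tors}\,\Lambda$ is an infinite poset, so it contains an infinite chain or an infinite antichain of functorially finite torsion classes; from such a family one forms a suitable supremum in the lattice of torsion classes which is not functorially finite (the relevant approximations fail to exist), giving $\neg\mathrm{(c)}$. The genuinely hard implication is $\mathrm{(b)}\Rightarrow\mathrm{(a)}$, equivalently $\neg\mathrm{(a)}\Rightarrow\neg\mathrm{(b)}$: producing infinitely many bricks out of $\tau$-tilting infiniteness. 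The tool here is Jasso's $\tau$-tilting reduction: for a well-chosen indecomposable $\tau$-rigid module $U$ the associated subcategory is equivalent to $B\textrm{-mod}$ for an algebra $B$ with $|B|<|\Lambda|$ whose support $\tau$-tilting modules form an interval of $\mathrm{s}\tau\textrm{-}\mathrm{tilt}\,\Lambda$; one arranges that $B$ is still $\tau$-tilting infinite, applies induction on $|\Lambda|$ to get infinitely many bricks over $B$, and transports them back along the exact, fully faithful comparison functor. Equivalently, one runs this through the $g$-vector fan of $\Lambda$: $\tau$-tilting finiteness is equivalent to completeness — hence finiteness — of that fan, its codimension-one cones are indexed by the bricks, and a fan assembled from finitely many walls cannot be incomplete.

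The main obstacle is precisely $\mathrm{(b)}\Rightarrow\mathrm{(a)}$: controlling the a priori infinite set $\mathrm{s}\tau\textrm{-}\mathrm{tilt}\,\Lambda$ from the finite combinatorial datum of the bricks. Both available routes require genuine work — the reduction route needs one to check that $\tau$-tilting infiniteness and an infinite supply of bricks both survive the reduction step, while the fan route needs the identification of walls with bricks together with a dimension count. Once the brick labelling of the Hasse quiver and $\tau$-tilting reduction (or, alternatively, the $g$-fan formalism) are in hand, the remaining implications are comparatively routine.
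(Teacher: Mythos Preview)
The paper does not prove this statement at all: it is quoted verbatim from \cite[Theorems 3.8 and 4.2]{DIJ} and used as a black box, with no argument given. So there is no ``paper's own proof'' to compare your proposal against. Your sketch is a reasonable high-level summary of the Demonet--Iyama--Jasso argument, but for the purposes of this paper nothing beyond the citation is required or expected.
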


\indent In the context of modular representation theory of finite groups, Eisele, Janssens and Raedschelders showed the following result:\\

\begin{thm}[See subsection 6.2 in \cite{EJR}]\label{tameblock}
    Tame blocks of group algebras of finite groups are $\tau$-tilting finite.\\
\end{thm}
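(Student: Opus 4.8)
The plan is to reduce the statement, via the classification of blocks of tame representation type, to a finite list of explicit algebras, and then to dispatch each one. Recall that a block $B$ of a group algebra has tame representation type only if $p=2$ and its defect group is a Klein four group or a dihedral, semidihedral, or generalized quaternion $2$-group, and that in every such case the number $\ell(B)$ of simple $B$-modules is at most $3$ (this is the block-theoretic input, due to Bondarenko--Drozd for the representation type and to work of Brauer--Olsson and Erdmann for the bound on $\ell(B)$; I would simply cite it). By Erdmann's classification of algebras of dihedral, semidihedral, and quaternion type --- supplemented by the later work pinning down the remaining scalar parameters --- such a block $B$ is Morita equivalent to one of finitely many basic algebras, each given by an explicit quiver with relations and indexed by the combinatorial data of the defect group together with $\ell(B)\in\{1,2,3\}$. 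Since $\tau$-tilting finiteness is a Morita invariant --- indeed, by Theorem \ref{DIJ}, equivalent to the finiteness of the number of bricks --- it suffices to verify it for each algebra on this list.

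First I would dispose of the case $\ell(B)=1$: here $B$ is a local symmetric algebra (in fact one of the group algebras $kV_4$, $kD_{2^n}$, $kSD_{2^n}$, $kQ_{2^n}$), and for any local algebra $\Lambda$ one has $|\Lambda|=1$, so the only basic support $\tau$-tilting modules are $\Lambda$ and $0$; hence such blocks are $\tau$-tilting finite trivially. For $\ell(B)\in\{2,3\}$ I would use a reduction theorem for $\tau$-rigid modules over symmetric algebras (as in \cite{EJR}): choosing a suitable idempotent $e$ --- for instance the sum of all but one of the primitive idempotents, so that $B/BeB$ is local --- one deduces $\tau$-tilting finiteness of $B$ from that of the strictly smaller algebra $eBe$, which has one fewer simple module, and iterating brings one down to the already-settled local case. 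Alternatively, one can bypass this in the dihedral and semidihedral cases, where the relevant algebras are symmetric special biserial, by appealing to the known description of $\tau$-tilting finite Brauer graph algebras (the Brauer graphs occurring for tame blocks are ``odd cycles'' in the relevant sense), and treat the quaternion-type algebras, which are not special biserial, by a direct --- but finite --- enumeration of their bricks from the explicit relations.

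The hard part will be the cases $\ell(B)\in\{2,3\}$. Along the reduction route, the work lies in verifying the hypotheses of the reduction theorem for each of Erdmann's algebras and confirming that the reduced algebras are $\tau$-tilting finite; along the direct route, the quaternion-type algebras are the genuine obstacle, since they are not special biserial and their module categories lack a transparent combinatorial model, so bounding the number of bricks requires a careful case analysis. In either approach one must also check that the outcome is insensitive to the scalar parameters left open in Erdmann's original lists --- a point one can either settle by hand or circumvent by noting that the reduction argument does not see those scalars. The remaining ingredients --- that tame type pins down the defect group, that it forces $\ell(B)\le 3$, and that every tame block indeed appears on Erdmann's list --- are established results that I would cite rather than reprove.
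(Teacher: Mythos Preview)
The paper itself does not prove this statement; Theorem~\ref{tameblock} is simply quoted from \cite{EJR}, so the relevant comparison is with the argument given there.

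Your overall architecture --- pass to Erdmann's list of basic algebras and verify $\tau$-tilting finiteness for each --- is indeed the strategy of \cite{EJR}, and your treatment of the local case $\ell(B)=1$ is correct. However, your description of the reduction for $\ell(B)\in\{2,3\}$ contains a genuine error. You propose to choose an idempotent $e$ with $B/BeB$ local and infer $\tau$-tilting finiteness of $B$ from that of the smaller algebra $eBe$; but this implication runs the wrong way. In general $\tau$-tilting finiteness descends \emph{from} an algebra \emph{to} $eAe$ and to quotients, not conversely --- your iteration, taken at face value, would show that every finite-dimensional algebra is $\tau$-tilting finite (keep cutting down to a primitive idempotent and land in the local case). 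The reduction theorem actually proved in \cite{EJR} is of a different shape: for a symmetric algebra $A$ and an ideal $I$ generated by elements of $Z(A)\cap J_A$, one obtains a poset isomorphism $\mathrm{s}\tau\textrm{-}\mathrm{tilt}\,A\cong\mathrm{s}\tau\textrm{-}\mathrm{tilt}\,A/I$. The application to tame blocks is that each of Erdmann's algebras --- quaternion type included --- admits such an $I$ with $A/I$ special biserial, and the resulting Brauer graph algebras are $\tau$-tilting finite by \cite{AAC}. In particular no separate brick enumeration for the quaternion case is required.

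Your alternative route (Brauer graph methods for the special biserial families, ad hoc analysis for quaternion type) could in principle be completed, but as written the quaternion case is a promise rather than an argument; the actual proof in \cite{EJR} sidesteps exactly this difficulty via the central-quotient reduction.
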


\noindent This property is peculiar to blocks of group algebras since there exist some examples of $\tau$-tilting infinite algebras of tame type which cannot be realized as blocks of group algebras (for example, see \cite[Theorem 6.7]{AAC}). By Theorem 
\ref{tameblock}, we have the following hierarchy of representation types of blocks of group algebras of finite groups:
$$
\begin{tikzcd}
    \textrm{representation finite blocks} \rar[Rightarrow] & \textrm{tame blocks} \rar[Rightarrow, "\textrm{Theorem} \ref{tameblock}"] &[10mm] \textrm{$\tau$-tilting finite blocks}.
\end{tikzcd}
$$
Note that the converse of each implication above does not hold.\\
\indent We can find that blocks are representation finite or tame by looking at their defect groups as the following:\\

\begin{thm}[See THEOREM in Introduction in {\cite{E}}]\label{repblock}
    Let $B$ be a block of a group algebra $kG$ of a finite group $G$ and $D$ a defect group of $B$. Then the following hold:
    \begin{enumerate}
        \setlength{\parskip}{0cm}
        \setlength{\itemsep}{0cm}
        \item $B$ is representation finite if and only if $D$ is cyclic.
        \item $B$ is representation infinite and tame if and only if $p=2$ and $D$ is isomorphic to a dihedral, semidihedral or generalized quaternion group.\\
    \end{enumerate}
\end{thm}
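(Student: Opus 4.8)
This is a classical result that one assembles from the structure theory of $p$-blocks, so rather than giving a self-contained argument the plan is to reduce each equivalence to known facts about Brauer tree algebras and about blocks of tame representation type, indicating where the real work lies.

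\emph{Part (a).} For ``$D$ cyclic $\Rightarrow$ $B$ representation finite'', the key input is Dade's theorem that a block with cyclic defect group is a Brauer tree algebra: its basic algebra is the symmetric algebra attached to a Brauer tree with a multiplicity. Such an algebra is special biserial (in fact Nakayama-like), and its indecomposable modules can be listed explicitly from the tree, so there are only finitely many; hence $B$ is representation finite. For the converse I would argue contrapositively. If $D$ is not cyclic it contains a copy of $C_p\times C_p$, so by Higman's criterion $kD$ has infinite representation type; one then transfers this to $B$, for instance by passing to the Brauer correspondent $b$ of $B$ in $N_G(D)$ (which still has defect group $D$) together with the Green correspondence, or by invoking the trichotomy that a block of non-cyclic defect is tame or wild. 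Either way $B$ is not representation finite.

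\emph{Part (b), ``if''.} Assume $p=2$ and $D$ is dihedral, semidihedral or generalized quaternion. Here the input is Erdmann's classification: the basic algebra of $B$ is Morita equivalent, up to finitely many possibilities and hence in particular derived equivalent, to one of an explicit finite list of symmetric algebras --- the algebras of dihedral, semidihedral or quaternion type. Dihedral-type algebras are special biserial, hence tame, and for the semidihedral- and quaternion-type algebras tameness is checked by a direct analysis of their quivers and relations. Since $D$ is non-cyclic, $B$ is representation infinite by part (a), so $B$ is tame but not finite.

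\emph{Part (b), ``only if'' --- the main obstacle.} Suppose $B$ is representation infinite and tame. By part (a) its defect group $D$ is non-cyclic. First one eliminates odd $p$: a block of non-cyclic defect at an odd prime is wild, so $p=2$. It then remains to prove that a $2$-group which is neither cyclic, dihedral, semidihedral nor generalized quaternion cannot be the defect group of a tame block --- equivalently, that such a defect group forces $B$ to be wild, in parallel with Bondarenko--Drozd's classification of $p$-groups with tame group algebra. This is the hard step: it rests on Erdmann's book-length, case-by-case determination of the basic algebras of $2$-blocks of tame representation type \cite{E}, which I would cite rather than reprove. Assembling these pieces yields the theorem.
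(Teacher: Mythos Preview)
The paper does not prove this theorem at all: it is stated with the attribution ``See THEOREM in Introduction in \cite{E}'' and no proof environment follows. It is quoted as a classical result from Erdmann's book and used as a black box (in the proof of Proposition~\ref{tfhyp} and in the discussion around Theorem~\ref{tameblock}).

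Your sketch is a reasonable and essentially correct outline of how the classical result is assembled --- Dade's theorem for cyclic defect, Higman's criterion for the converse of (a), Erdmann's classification of tame blocks for (b) --- and you are right that the ``only if'' direction of (b) is the substantial part, resting on the full content of \cite{E}. But since the paper itself gives no proof, there is nothing to compare your approach against; the appropriate response in the paper's context is simply to cite \cite{E}, which is exactly what the authors do.
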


\noindent It is natural to wonder whether there exist conditions on defect groups for blocks to be $\tau$-tilting finite. However, blocks can be $\tau$-tilting finite or infinite even though their defect groups are isomorphic (see \cite[Corollary 4]{EJR} for example). Instead of defect groups, we should consider so-called \textit{$p$-hyperfocal subgroups}. Denote by $O^p(G)$ the smallest normal subgroup of $G$ such that its quotient is a $p$-group.\\

\begin{dfn}
    A \textit{$p$-hyperfocal subgroup} of a finite group $G$ is the intersection of a Sylow $p$-subgroup and $O^p(G)$.\\
\end{dfn}

\begin{prop}\label{tfhyp}
    Let $R$ be a $p$-hyperfocal subgroup of a finite group $G$. Then a group algebra $kG$ is $\tau$-tilting finite if one of the following holds.
    \begin{enumerate}
        \setlength{\parskip}{0cm}
        \setlength{\itemsep}{0cm}
        \item $R$ is cyclic.
        \item $p=2$ and $R$ is isomorphic to a dihedral, semidihedral or generalized quaternion group.
    \end{enumerate}
\end{prop}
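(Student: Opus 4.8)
The plan is to reduce the statement to the normal subgroup $N:=O^p(G)$ and then to argue block by block. First I would record the elementary group theory behind the definition: since $P$ is a Sylow $p$-subgroup, $[G:PN]$ is simultaneously a power of $p$ and prime to $p$, hence $PN=G$; consequently $P/(P\cap N)\cong G/N$ is a $p$-group and $R=P\cap N$ is a Sylow $p$-subgroup of $N$. The principal input is then the fact that, for a normal subgroup $N$ of a finite group $G$ with $G/N$ a $p$-group, $kG$ is $\tau$-tilting finite if and only if $kN$ is; granting this (it would be cited, or proved as indicated below), it suffices to show that $kO^p(G)$ is $\tau$-tilting finite under hypothesis (a) or (b).

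To do the latter, I would use that $kN$ is $\tau$-tilting finite exactly when every block of $kN$ is, because the block decomposition $kN=\prod_i B_i$ yields $\mathrm{s}\tau\textrm{-}\mathrm{tilt}\,kN\cong\prod_i\mathrm{s}\tau\textrm{-}\mathrm{tilt}\,B_i$. So let $B$ be a block of $kN$ with defect group $D$; being a $p$-subgroup of $N$, $D$ is conjugate to a subgroup of the Sylow $p$-subgroup $R$. Under (a), $D$ is cyclic, so $B$ is representation-finite by Theorem \ref{repblock}(a); then $B$ has only finitely many isoclasses of bricks and is $\tau$-tilting finite by Theorem \ref{DIJ}. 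Under (b), $p=2$ and $R$ is dihedral, semidihedral or generalized quaternion, and a routine inspection of subgroup lattices shows that every subgroup of such a $2$-group is again cyclic, dihedral, semidihedral or generalized quaternion; hence $D$ is of one of these types. If $D$ is cyclic, $B$ is $\tau$-tilting finite as in case (a); otherwise $D$ is dihedral, semidihedral or generalized quaternion, so $B$ is tame by Theorem \ref{repblock}(b) and hence $\tau$-tilting finite by Theorem \ref{tameblock}. In every case each block of $kN$ is $\tau$-tilting finite, so $kN$ is, and therefore so is $kG$.

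The crux is the reduction invoked in the first paragraph. If it is not to be cited as a black box, I would prove it by induction on $|G/N|$, reducing to $[G:N]=p$, and use the Clifford theory of blocks together with Fong--Reynolds: a block of $kG$ covering a block $b$ of $kN$ is Morita equivalent to a block of $k\widehat{G}_b$ lying over $b$, where $\widehat{G}_b$ is the stabiliser of $b$ and $\widehat{G}_b/N$ is again a $p$-group, after which one analyses the crossed product $kN*(G/N)$ directly, exploiting that $|G/N|$ is a power of the characteristic. Everything after the reduction is bookkeeping with defect groups together with the already-available structural results on blocks with cyclic or with tame defect groups.
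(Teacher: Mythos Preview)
Your approach is correct and essentially the same as the paper's. The paper's proof is a two-line argument citing \cite[Theorem 3.10]{KK3} for the reduction (if $kO^p(G)$ is $\tau$-tilting finite then so is $kG$) and then invoking Theorems \ref{tameblock} and \ref{repblock} directly, using that $R$ is a Sylow $p$-subgroup of $O^p(G)$; you unpack exactly the same content, making the block-by-block argument and the subgroup-lattice analysis explicit rather than leaving them implicit.
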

\begin{proof}
    By \cite[Theorem 3.10]{KK3}, if $kO^p(G)$ is $\tau$-tilting finite, then so is $kG$. Hence, the assertion holds from Theorems \ref{tameblock} and \ref{repblock} since $R$ is a Sylow $p$-subgroup of $O^p(G)$.
\end{proof}

\vspace{3mm}

We conjecture that the converse of Proposition \ref{tfhyp} is also true. In section 3, we will give a criterion for a group algebra $k[P\rtimes H]$ to be $\tau$-tilting finite, where $P$ is an abelian $p$-group and $H$ is an abelian $p'$-group acting on $P$, and verify that the converse of Proposition \ref{tfhyp} holds in this case.\\

\subsection{Zigzag cycles and $\tau$-tilting infiniteness}

Let $Q$ be a finite quiver and $I$ an admissible ideal of a path algebra $kQ$, that is, an ideal of $kQ$ satisfying $J^m_{kQ}\subseteq I\subseteq J^2_{kQ}$ for some integer $m\geq2$, where $J_{kQ}$ denotes the Jacobson radical of $kQ$. We denote by $Q_0$ (resp. $Q_1$) the set of vertices (resp. arrows) of $Q$ and by $s,t: Q_1\rightarrow Q_0$ maps sending arrows to their sources and targets, respectively.\\

\begin{dfn}\label{zigdfn}
    A \textit{zigzag cycle} of length $n$ is a sequence of arrows $a_1,\ldots,a_n\in Q_1$ satisfying the following conditions:
    \begin{enumerate}
        \setlength{\parskip}{0cm}
        \setlength{\itemsep}{0cm}
        \item A sequence $a_1,\ldots,a_n$ is a zigzag path, that is, for every $1\leq i\leq n-1$, $t(a_i)=t(a_{i+1})$ if $i$ is odd and $s(a_i)=s(a_{i+1})$ if $i$ is even.
        \item $s(a_1)=t(a_n)$ if $n$ is odd and $s(a_1)=s(a_n)$ if $n$ is even.\\
    \end{enumerate}
\end{dfn}

\noindent We can draw zigzag cycles as follows (even length on the left and odd length on the right):
$$
\begin{tikzcd}
     & \cdot & \cdot \lar["a_2"'] \dar["a_3"] &[10mm] & \cdot & \cdot \lar["a_2"'] \dar["a_3"] \\
    \cdot \ar[ru,"a_1"] \ar[rd,"a_n"'] & & \vdots & \cdot \ar[ru,"a_1"] & & \vdots \dar["a_{n-2}"] \\
     & \cdot & \cdot \lar["a_{n-1}"] \uar["a_{n-2}"'] &  & \cdot \ar[lu,"a_n"] \rar["a_{n-1}"'] & \cdot 
\end{tikzcd}
$$

\vspace{3mm}

\begin{dfn}
    We call each element $f\in I$ a \textit{relation}. For any relation $f\in I$, we can uniquely write $f$ as a linear combination of paths $q_i$, that is, $f=\sum_{i}c_iq_i$ ($c_i\in k^\times$). Then, for any path $q$, we say that \textit{a path $q$ appears in a relation $f$} if $q=q_i$ for some $i$.\\
\end{dfn}

\begin{prop}\label{zig}
    Assume that there exists a zigzag cycle $a_1,\ldots,a_n$ of length $n\geq2$ with distinct vertices in $Q$ and $I$ is generated by $f_1,\ldots,f_r\in kQ$. Then $kQ/I$ is $\tau$-tilting infinite if one of the following holds:
    \begin{enumerate}
        \setlength{\parskip}{0cm}
        \setlength{\itemsep}{0cm}
        \item The length $n$ is even.
        \item The length $n$ is odd and a path $a_1a_n$ does not appear in any relations $f_1,\ldots,f_r$.
    \end{enumerate}
\end{prop}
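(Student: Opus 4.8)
The plan is to build, from the hypothesised zigzag cycle, a family of pairwise non-isomorphic bricks (or equivalently indecomposable $\tau$-rigid modules) parametrised by $\mathbb{P}^1(k)$ — or at least an infinite family — so that $\tau$-tilting infiniteness follows from the equivalence $(a)\Leftrightarrow(b)$ in Theorem~\ref{DIJ}. The natural candidates are string-type or band-type modules supported on the vertices of the zigzag cycle: one lays down the underlying graph of $a_1,\dots,a_n$ as a cyclic "fence" (peaks and valleys alternating), and for a scalar $\lambda\in k^\times$ twists one of the valley (or peak) maps by $\lambda$, exactly as in the classical construction of band modules over special biserial algebras. The distinct-vertices hypothesis guarantees that this representation is well defined as a representation of $Q$ with one-dimensional spaces at each vertex of the cycle and zero elsewhere, and the even/odd dichotomy governs whether the orientation closes up into a genuine band.

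The key steps, in order. First I would fix the module $M_\lambda$: put $k$ at each vertex $s(a_i),t(a_i)$ of the cycle, send each arrow of the cycle to the identity except one distinguished arrow which acts as multiplication by $\lambda$, and send every other arrow of $Q$ (in particular every arrow occurring in a relation $f_j$) to zero. Second, I would check that this assignment kills $I$: any path of length $\geq 2$ lying inside the cycle is zero because consecutive arrows of a zigzag path never compose (a peak is hit by two arrows, a valley emits two arrows, so $t(a_i)=t(a_{i+1})$ or $s(a_i)=s(a_{i+1})$ forces the composite to be $0$ in $M_\lambda$), and any path using an arrow outside the cycle acts as $0$ by construction — here in case $(b)$ the odd-length closing path $a_1 a_n$ is the only composite of two cycle-arrows that could a priori survive, and the hypothesis that $a_1a_n$ appears in no relation is precisely what lets us set it to $0$ consistently (in the even case $a_1,a_n$ share a source, so $a_1 a_n$ is not even a path and no such condition is needed). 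Third, I would verify $\operatorname{End}_{kQ/I}(M_\lambda)\cong k$, i.e. that $M_\lambda$ is a brick: an endomorphism is a tuple of scalars at the cycle vertices commuting with all arrow actions, and walking around the cycle forces all these scalars equal. Fourth, distinctness: $M_\lambda\cong M_\mu$ forces $\lambda=\mu$ (or $\lambda,\mu$ in the same orbit under inversion), so $\{M_\lambda\}_{\lambda}$ contains infinitely many isoclasses of bricks. Finally, invoke Theorem~\ref{DIJ}$(b)$ to conclude $kQ/I$ is $\tau$-tilting infinite.

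The main obstacle I anticipate is the relation-killing step, step two — in particular the odd case $(b)$. In the even case the combinatorics are clean because the closing condition $s(a_1)=s(a_n)$ means the walk is an honest band and the only compositions to worry about are the "forbidden" peak/valley ones, all automatically zero. In the odd case the walk closes via $t(a_n)=s(a_1)$, so $a_1 a_n$ genuinely is a path of $Q$ through the distinguished vertex, and $M_\lambda$ as a representation would ostensibly have $a_1 a_n$ acting as a nonzero scalar; one must instead realise the cyclic module with a "defect" at that spot — concretely, take the string given by the linear (non-cyclic) walk $a_2,a_3,\dots,a_n,a_1$ and identify its two endpoints via $\lambda$, so that the composite $a_1 a_n$ is never evaluated as a single map — and then one has to argue that no relation $f_j$ forces a contradiction, which is exactly where the hypothesis "$a_1a_n$ does not appear in any relation" is used: were $a_1a_n$ a summand of some $f_j=\sum c_i q_i$, the other paths $q_i$ all factor through arrows outside the cycle (hence act as $0$ on $M_\lambda$) while $a_1a_n$ would have to act as $0$ too, contradicting $\lambda\neq 0$; the hypothesis rules this out. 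Making the module-theoretic bookkeeping around this identification precise — rather than the elementary verifications of brickness and distinctness — is where the real work lies.
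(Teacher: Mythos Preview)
Your construction is essentially correct and reaches the same conclusion, but the paper takes a shorter, more conceptual route. Rather than building explicit band modules $M_\lambda$ and verifying brickness and distinctness by hand, the paper observes that the subquiver $Q'$ supported on the zigzag cycle is an extended Dynkin quiver of type $\tilde A_{n-1}$, and that the hypotheses force $I\subset I'$, where $I'$ is the ideal generated by all vertices and arrows \emph{outside} $Q'$. This yields an algebra surjection $kQ/I\twoheadrightarrow kQ/I'\cong kQ'$, and one then imports the infinitely many bricks of $kQ'$ (the paper cites the preprojective modules) back to $kQ/I$. The containment $I\subset I'$ is exactly your ``relation-killing'' step two, phrased once and for all at the level of ideals: every path appearing in the generators $f_1,\dots,f_r$ must use an arrow outside the cycle, because in the even case no two cycle arrows compose, and in the odd case the unique composite $a_1a_n$ is excluded by hypothesis. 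What the paper's approach buys is that steps three and four of your outline become a one-line citation, and the odd case needs no separate treatment at the module level.

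One point in your write-up deserves correction: the ``defect'' manoeuvre you sketch for the odd case is both unnecessary and not well-defined. If each cycle vertex carries a one-dimensional space and $a_1,a_n$ each act by a nonzero scalar, then the composite $a_1a_n$ \emph{necessarily} acts by their product; there is no way to arrange that it ``is never evaluated''. The resolution is simpler than you suggest: you do not need $a_1a_n$ to act as zero on $M_\lambda$, you only need $M_\lambda$ to satisfy the relations in $I$, and the hypothesis guarantees precisely that no relation constrains the value of $a_1a_n$. Your final sentence in that paragraph already says this correctly; the ``defect'' detour before it should simply be deleted.
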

\begin{proof}
    Let $Q'$ be a subquiver of $Q$ consisting of all the vertices and arrows lying in the zigzag cycle $a_1,\ldots,a_n$, that is, $Q'_0:=\{s(a_i)\}_{1\leq i\leq n}\cup\{t(a_i)\}_{1\leq i\leq n}$ and $Q'_1:=\{a_i\}_{1\leq i\leq n}$, and $I'$ an ideal of $kQ$ generated by all the vertices and arrows not in $Q'$. By the assumption, every path appearing in $f_1,\ldots,f_r$ contains an arrow other than $a_1,\ldots,a_n$, and hence it follows that $I\subset I'$. Thus, we have an algebra surjection $kQ/I\twoheadrightarrow kQ/I'\cong kQ'$. Since $Q'$ is an extended Dynkin quiver, $kQ'$ has infinitely many isoclasses of indecomposable preprojective modules, which are bricks (for example, see \cite[Theorem 7.3.1 and Proposition 6.4.3]{DW} or \cite[Section VII.2]{ASS}), and hence we can obtain infinitely many isoclasses of bricks over $kQ/I$ via the surjection $kQ/I\twoheadrightarrow kQ'$. Therefore, $kQ/I$ is $\tau$-tilting infinite by Theorem \ref{DIJ}.
\end{proof}

\vspace{3mm}

\section{Main results}

Throughout this section, let $P$ be a $p$-group of rank $n$, $H$ a $p'$-group acting on $P$, and $G:=P\rtimes H$.\\
\indent First, we will determine the shape of the Gabriel quiver of $kG$. Notice that the Jacobson radical $J_{kG}$ of $kG$ is equal to $J_{kP}kH$ (see \cite[Theorem 1.1]{RR}) and that we have $kG/(J_{kP}kH)\cong kH$. Let $M:=J_{kP}/J^2_{kP}$. We can regard $M$ as an $n$-dimensional $kH$-module by conjugation. Denote the set of irreducible characters of $H$ by $\mathrm{Irr}\,H$, the dual character of $\chi\in\mathrm{Irr}\,H$ by $\chi^*$, and the simple $kH$-module corresponding to $\chi\in\mathrm{Irr}\,H$ by $S_\chi$.\\ 

\begin{prop}\label{genquiv}
    The Gabriel quiver of $kG$ is given by the following:
    \begin{itemize}
        \setlength{\parskip}{0cm}
        \setlength{\itemsep}{0cm}
        \item The vertex set is $\mathrm{Irr}\,H$.
        \item For $\lambda,\mu\in\mathrm{Irr}\,H$, the number of arrows from $\lambda$ to $\mu$ is $\mathrm{dim}_k\mathrm{Hom}_{kH}(S_\mu,M\otimes S_\lambda)$.
    \end{itemize}
\end{prop}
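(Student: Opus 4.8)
The plan is to extract the Gabriel quiver of $kG$ from its structure as a "twisted" algebra over $kH$, using the isomorphism $kG/J_{kG} \cong kH$ and the description $J_{kG} = J_{kP}kH$. Since $kH$ is semisimple (as $H$ is a $p'$-group), the simple $kG$-modules are exactly the simple $kH$-modules $S_\chi$, inflated along $kG \twoheadrightarrow kH$; this immediately identifies the vertex set of the Gabriel quiver with $\mathrm{Irr}\,H$. For the arrows, recall that the number of arrows from $\lambda$ to $\mu$ in the Gabriel quiver of any algebra $\Lambda$ equals $\dim_k \mathrm{Ext}^1_\Lambda(S_\lambda, S_\mu)$, equivalently the multiplicity of $S_\mu$ in $(J_{kG}/J_{kG}^2)\otimes_{kH} S_\lambda$ once one identifies $J_{kG}/J_{kG}^2$ as a $kH$-bimodule.

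First I would compute $J_{kG}/J_{kG}^2$ as a $kH$-bimodule. Using $J_{kG} = J_{kP}kH$ and the fact that $J_{kG}^2 = J_{kP}^2 kH$ (which follows from $kH$ being a semisimple subalgebra normalizing $J_{kP}$, or directly from the relation in \cite[Theorem 1.1]{RR}), we get $J_{kG}/J_{kG}^2 \cong (J_{kP}/J_{kP}^2)\otimes_k kH = M \otimes_k kH$ as a $k$-space, where the left $kH$-action is the diagonal one (conjugation action on $M$ twisted together with left multiplication on $kH$) and the right $kH$-action is right multiplication on the $kH$-factor. Then, for the simple module $S_\lambda$, the space $\mathrm{Ext}^1_{kG}(S_\lambda, S_\mu)$ is computed from the minimal projective presentation, and one obtains $\mathrm{Ext}^1_{kG}(S_\lambda,S_\mu) \cong \mathrm{Hom}_{kH}\big(S_\mu,\ (J_{kG}/J_{kG}^2)\otimes_{kH}S_\lambda\big)$. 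Substituting the bimodule description and simplifying the tensor product $(M\otimes_k kH)\otimes_{kH}S_\lambda \cong M\otimes_k S_\lambda$ (with the appropriate diagonal $kH$-module structure) yields that the arrow count from $\lambda$ to $\mu$ is $\dim_k\mathrm{Hom}_{kH}(S_\mu, M\otimes S_\lambda)$, as claimed.

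The step I expect to require the most care is keeping the left and right $kH$-module structures on $J_{kG}/J_{kG}^2$ straight, and in particular verifying that the conjugation action of $H$ on $M = J_{kP}/J_{kP}^2$ is precisely the action that appears after the change of rings $\otimes_{kH}$. Concretely, an element $h \in H$ and $x \in J_{kP}$ satisfy $hx = (hxh^{-1})h = {}^h x \cdot h$ inside $kG$, so moving scalars from the right of the $kH$-factor to the left introduces exactly the conjugation twist; this is the computation that must be done carefully modulo $J_{kP}^2 kH$. Once this bimodule bookkeeping is settled, the identification of $\mathrm{Ext}^1$ with the stated $\mathrm{Hom}$-space is a routine application of the standard formula for arrows in a Gabriel quiver together with exactness of $-\otimes_{kH}S_\lambda$ (valid since $kH$ is semisimple), so no further obstacle arises.
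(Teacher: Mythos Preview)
Your proposal is correct and follows essentially the same route as the paper. Both arguments identify the vertex set via the surjection $kG\twoheadrightarrow kH$, use $J_{kG}=J_{kP}kH$ to recognize $(J_{kG}/J_{kG}^2)e'_\lambda$ as $M\otimes S_\lambda$ with the conjugation $H$-action, and then count $S_\mu$-multiplicities there; the only cosmetic difference is that the paper packages the last step via Frobenius reciprocity (using $kGe'_\mu\cong\mathrm{Ind}^G_H S_\mu$ and the adjunction with $\mathrm{Res}^G_H$) rather than your explicit $kH$-bimodule tensor computation.
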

\begin{proof}
    The embedding functor $kH\mathrm{\textrm{-}mod}\hookrightarrow kG\mathrm{\textrm{-}mod}$ induced by the surjection $kG\twoheadrightarrow kG/J_{kG}\cong kH$ yields a bijection between the set of isoclasses of simple $kH$-modules and that of simple $kG$-modules. Therefore, we can identify the vertex set of the Gabriel quiver of $G$ with $\mathrm{Irr}\,H$.\\
    \indent For $\lambda,\mu\in\mathrm{Irr}\,H$, the number of arrows from $\lambda$ to $\mu$ is equal to $\mathrm{dim}_k\,e'_\mu(J_{kG}/J^2_{kG})e'_\lambda$, where we take primitive idempotents $e'_\lambda$ and $e'_\mu$ such that $kHe'_\lambda\cong S_\lambda$ and $kHe'_\mu\cong S_\mu$. Note that $kGe'_\mu$ is the projective cover of and $S_\mu$ in $kG\textrm{-mod}$. Since $(J_{kG}/J^2_{kG})e'_\lambda=(J_{kP}/J^2_{kP})kHe'_\lambda$, we have
    \begin{align*}
        e'_\mu(J_{kG}/J^2_{kG})e'_\lambda & \cong\mathrm{Hom}_{kG}(kGe'_\mu,(J_{kG}/J^2_{kG})e'_\lambda)\\
        & =\mathrm{Hom}_{kG}(\mathrm{Ind}^G_H\,S_\mu,(J_{kP}/J^2_{kP})kHe'_\lambda)\\
        & \cong\mathrm{Hom}_{kH}(S_\mu,\mathrm{Res}^G_H\,(J_{kP}/J^2_{kP})kHe'_\lambda)\\
        & \cong\mathrm{Hom}_{kH}(S_\mu,M\otimes S_\lambda).
    \end{align*}
    Therefore, the assertion follows.
\end{proof}

\vspace{3mm}

\begin{ex}
    Let $p>3$, $P:=(C_p)^3$ with generators $a,b,c$, and $H:=\mathfrak{S}_3$. Recall that $H$ has three irreducible representations: the trivial, standard and sign representation. Denote the corresponding characters by $\mathrm{triv}, \mathrm{std}$ and $\mathrm{sgn}$, respectively. We define $G:=P\rtimes H$ via the action of $H$ on $P$ given by the permutation of the entries. Then $M$ has a $k$-basis
    $$\{1-a+J^2_{kP},\,1-b+J^2_{kP},\,1-c+J^2_{kP}\}$$
    and is isomorphic to $S_\mathrm{triv}\oplus S_\mathrm{std}$ as a $kH$-module. Moreover, we have the following isomorphisms as $kH$-modules:
    $$S_\mathrm{triv}\otimes S_\chi\cong S_\chi\;(\forall\chi\in\mathrm{Irr}\,H),$$
    $$S_\mathrm{std}\otimes S_\mathrm{std}\cong S_\mathrm{triv}\oplus S_\mathrm{std}\oplus S_\mathrm{sgn},\,S_\mathrm{std}\otimes S_\mathrm{sgn}\cong S_\mathrm{std},\,S_\mathrm{sgn}\otimes S_\mathrm{sgn}\cong S_\mathrm{triv}.$$
    Therefore, by Proposition \ref{genquiv}, the Gabriel quiver of $kG$ is as follows:
    $$
    \begin{tikzcd}
        \mathrm{triv} \ar[loop left] \rar[yshift=0.5ex] & \mathrm{std} \ar[out=120,in=60,loop,looseness=5] \ar[out=-60,in=-120,loop,looseness=5] \lar[yshift=-0.5ex] \rar[yshift=0.5ex] & \mathrm{sgn} \lar[yshift=-0.5ex] \ar[loop right] & .
    \end{tikzcd}
    $$
\end{ex}

\vspace{4mm}

\indent Assume that $P$ and $H$ are both abelian. Note that $kG$ is a basic algebra since $H$ is abelian. In this situation, Benson, Kessar and Linckelmann \cite{BKL1} described the quiver and relations of a twisted group algebra $k_\beta G$ for any 2-cocycle $\beta\in H^2(H,k^\times)$, but we shall give the proof in case $\beta=1$ for convenience.\\
\indent By \cite[Theorem 2.2 in Chapter 5]{G}, we can decompose $P$ into $H$-invariant homocyclic summands, that is, we can assume that $P=\prod_{i\geq1} (C_{p^i})^{t_i}$ and $h(C_{p^i})^{t_i}h^{-1}=(C_{p^i})^{t_i}$ for every $h\in H$ and $i\geq1$. Let $P_i:=(C_{p^{i}})^{t_{i}}\leq P$,
$$\Gamma:=\{(i,j)\mid i\geq1,\,t_i\neq0,\,1\leq j\leq t_i\},$$
and $\pi:kP\twoheadrightarrow kP/J^2_{kP}$ the natural surjection. Since $kH$ is semisimple, we can take a $H$-invariant complement $W_i$ of $J^2_{kP_i}$ in $J_{kP_i}$. Notice that $J_{kP}$ has a decomposition $J_{kP}=\bigoplus_{i\geq1}W_i\oplus J_{kP_i}^2$ as a $kH$-module.\\

\begin{prop}\label{abelquiv}
    Assume that $P$ and $H$ are both abelian. Then there exists a set of elements $\{m_{ij}\in W_i\}_{(i,j)\in\Gamma}$ such that $M=\bigoplus_{(i,j)\in\Gamma}kH\cdot\pi(m_{ij})$ and each $kH\cdot\pi(m_{ij})$ is simple as a $kH$-module, where the dots mean the conjugation action of $H$. Moreover, $kG$ is isomorphic to $kQ/I$, where a quiver $Q$ and an admissible ideal $I$ are given by the following:
    \begin{itemize}
        \setlength{\parskip}{0cm}
        \setlength{\itemsep}{0cm}
        \item The vertex set of $Q$ is $\mathrm{Irr}\,H$.
        \item The arrow set of $Q$ is $\{\alpha_{ij\lambda}:\lambda\rightarrow\chi_{ij}\otimes\lambda\}_{(i,j)\in\Gamma,\,\lambda\in\mathrm{Irr}\,H}$, where $\chi_{ij}\in\mathrm{Irr}\,H$ satisfies $kH\cdot\pi(m_{ij})\cong S_{\chi_{ij}}$.
        \item $I=\langle \alpha_{ij}\alpha_{i'j'}-\alpha_{i'j'}\alpha_{ij},\,\alpha_{ij}^{p^i}\mid (i,j),(i',j')\in\Gamma\rangle$, where the relations mean that the following equations hold for all $\lambda\in\mathrm{Irr}\,H$:
    \end{itemize}
    $$
    \begin{tikzcd}
        (\lambda \rar["\alpha_{i'j'\lambda}"] & \chi_{i'j'}\otimes\lambda \rar["\alpha_{ij,\chi_{i'j'}\otimes\lambda}"] &[8mm] \chi_{ij}\otimes\chi_{i'j'}\otimes\lambda) = (\lambda \rar["\alpha_{ij\lambda}"] & \chi_{ij}\otimes\lambda \rar["\alpha_{i'j',\chi_{ij}\otimes\lambda}"] &[8mm] \chi_{i'j'}\otimes\chi_{ij}\otimes\lambda),
        \end{tikzcd}
    $$
    $$
    \begin{tikzcd}
        (\lambda \rar["\alpha_{ij\lambda}"] & \chi_{ij}\otimes\lambda \rar["\alpha_{ij,\chi_{ij}\otimes\lambda}"] &[8mm] \chi_{ij}^{\otimes 2}\otimes\lambda \rar["\alpha_{ij,\chi_{ij}^{\otimes 2}\otimes\lambda}"] &[8mm] \cdots \rar & \chi_{ij}^{\otimes p^i}\otimes\lambda)=0.
    \end{tikzcd}
    $$
\end{prop}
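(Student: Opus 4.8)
The plan is to compute the quiver and relations of $kG$ directly, using the decomposition $P=\prod_{i\ge 1}P_i$ into $H$-invariant homocyclic pieces already fixed before the statement. First I would establish the claim about the $m_{ij}$. Since $kH$ is semisimple and $H$ is abelian, the $kH$-module $M=J_{kP}/J^2_{kP}$ decomposes as a direct sum of one-dimensional modules; the decomposition $J_{kP}=\bigoplus_{i\ge1}W_i\oplus J_{kP_i}^2$ is $H$-stable, and $\pi$ restricts to an $H$-isomorphism $\bigoplus_i W_i \xrightarrow{\sim} M$, so $M=\bigoplus_i \pi(W_i)$. Because $W_i$ has dimension $t_i$, I can pick a basis $\pi(m_{i1}),\dots,\pi(m_{it_i})$ of $\pi(W_i)$ consisting of $kH$-eigenvectors, i.e. $kH\cdot\pi(m_{ij})\cong S_{\chi_{ij}}$ for suitable $\chi_{ij}\in\mathrm{Irr}\,H$; this gives $M=\bigoplus_{(i,j)\in\Gamma}kH\cdot\pi(m_{ij})$ with each summand simple.

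Next, the vertex set and arrows of $Q$ follow from Proposition~\ref{genquiv}: the number of arrows $\lambda\to\mu$ is $\dim_k\mathrm{Hom}_{kH}(S_\mu,M\otimes S_\lambda)$, and since $M\otimes S_\lambda=\bigoplus_{(i,j)\in\Gamma}S_{\chi_{ij}}\otimes S_\lambda=\bigoplus_{(i,j)}S_{\chi_{ij}\otimes\lambda}$, there is exactly one arrow $\lambda\to\chi_{ij}\otimes\lambda$ for each $(i,j)\in\Gamma$, which we name $\alpha_{ij\lambda}$. So the underlying quiver is correct, and it remains to identify the relations. The idea is to lift: for each $(i,j)$ choose an actual element $x_{ij}\in kP$ with $\pi(x_{ij})=\pi(m_{ij})$ lying in the appropriate eigen-component and, using that $P_i$ is homocyclic of exponent $p^i$, arrange $x_{ij}=g_{ij}-1$ for a generator $g_{ij}$ of a cyclic factor of $P_i$ (or a suitable unit-translate thereof), so that $x_{ij}^{p^i}=0$ in $kP$ and the $x_{ij}$ commute. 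Then define a map $kQ\to kG$ sending $e_\lambda$ to the block idempotent $e'_\lambda$ of $kH$ and $\alpha_{ij\lambda}\mapsto e'_{\chi_{ij}\otimes\lambda}\,x_{ij}\,e'_\lambda$. One checks this is a well-defined algebra homomorphism, surjective because the images of the $\alpha$'s generate $J_{kP}kH/(\text{higher})=J_{kG}$ modulo $J_{kG}^2$ together with $kH$, hence generate $kG$ by Nakayama.

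Finally I would verify the relations. The commutativity relations $\alpha_{ij}\alpha_{i'j'}=\alpha_{i'j'}\alpha_{ij}$ hold because the $x_{ij}$ commute in the abelian group algebra $kP$ and the idempotents $e'_\lambda$ are central in $kH$ and interact correctly with the grading by $\mathrm{Irr}\,H$-weights; the nilpotency relations $\alpha_{ij}^{p^i}=0$ hold because $x_{ij}^{p^i}=0$. So $I':=\langle \alpha_{ij}\alpha_{i'j'}-\alpha_{i'j'}\alpha_{ij},\,\alpha_{ij}^{p^i}\rangle$ lies in the kernel, giving a surjection $kQ/I'\twoheadrightarrow kG$. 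For equality I would compare dimensions: $kQ/I'$ is, vertex by vertex, a twisted tensor product of truncated polynomial algebras $k[\alpha]/(\alpha^{p^i})$ indexed by $\Gamma$, smashed with $kH$, so $\dim_k kQ/I' = |H|\cdot\prod_{(i,j)\in\Gamma}p^i = |H|\cdot|P| = |G| = \dim_k kG$, forcing the surjection to be an isomorphism and $I=I'$.

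\emph{Main obstacle.} The delicate point is the lifting step: choosing the $x_{ij}\in kP$ so that simultaneously (i) they reduce mod $J^2_{kP}$ to the chosen $kH$-eigenvectors $\pi(m_{ij})$, (ii) they satisfy $x_{ij}^{p^i}=0$ exactly (not just modulo higher radical powers), and (iii) they literally commute. Naively $m_{ij}$ need not be a group-element-minus-one, so one must exploit that $P_i$ is homocyclic of exponent $p^i$ and that $kH$ is semisimple to adjust $m_{ij}$ within its eigen-component to such a form; making this precise — essentially choosing an $H$-equivariant basis of $P_i/P_i^p$ and lifting it to commuting generators whose associated $(g-1)$'s have the exact nilpotency order — is where the real work lies. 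Everything after that is the dimension count, which is routine.
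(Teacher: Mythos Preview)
Your overall strategy --- build an explicit surjection $kQ\to kG$ sending $\alpha_{ij\lambda}$ to $e_{\chi_{ij}\otimes\lambda}\,x_{ij}\,e_\lambda$ for suitable $x_{ij}\in kP$, verify the relations, then finish by a dimension count --- is exactly the paper's. The difference is that the ``main obstacle'' you flag is not actually there, and the paper dissolves it in one line.

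You already observed that $\pi$ restricts to an $H$-isomorphism $W_i\xrightarrow{\sim}\pi(W_i)$; hence the $m_{ij}\in W_i$ you chose are themselves $H$-eigenvectors on the nose, not merely modulo $J^2_{kP}$. So take $x_{ij}:=m_{ij}$. These commute (they lie in the commutative algebra $kP$), and since $m_{ij}\in W_i\subset J_{kP_i}$ with $P_i$ homocyclic of exponent $p^i$, one has $m_{ij}^{p^i}=0$ \emph{automatically}: writing $kP_i\cong k[x_1,\dots,x_{t_i}]/(x_1^{p^i},\dots,x_{t_i}^{p^i})$, any $m=\sum_{|\alpha|\ge 1}c_\alpha x^\alpha\in J_{kP_i}$ satisfies $m^{p^i}=\sum c_\alpha^{p^i}x^{p^i\alpha}=0$ because the $p^i$-th power map is additive in characteristic $p$ and each monomial $x^{p^i\alpha}$ has some exponent $\ge p^i$. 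There is no need to massage $m_{ij}$ into the form $g-1$, nor to choose an $H$-equivariant generating set of $P_i$; the paper simply computes $e_\mu m_{ij}e_\lambda=m_{ij}e_\lambda$ when $\mu=\chi_{ij}\otimes\lambda$ (and $0$ otherwise) directly from the eigenvector property, and the relations follow immediately. Your dimension count is fine and matches the paper's path-counting argument.
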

\begin{proof}
    Let $\{i\geq1\mid t_i\neq0\}=\{i_1<i_2<\cdots\}$. We can take a set of generators $\{g_{ij}\}_{(i,j)\in\Gamma}$ of $P$ such that the order of $g_{ij}$ is $p^i$ and that the action of $h\in H$ on $P$ is represented by the following block diagonal $n\times n$ integer matrix with respect to the generators $g_{ij}$:
    $$
    A^h=
    \begin{pmatrix}
        A^h_{i_1} & 0 & \cdots  \\
        0 & A^h_{i_2} & \cdots  \\
        \vdots & \vdots & \ddots \\
    \end{pmatrix}
    \in\mathrm{End}(P)=
    \begin{pmatrix}
        \mathrm{End}((C_{p^{i_1}})^{t_{i_1}}) & \mathrm{Hom}((C_{p^{i_2}})^{t_{i_2}},(C_{p^{i_1}})^{t_{i_1}}) & \cdots  \\
        \mathrm{Hom}((C_{p^{i_1}})^{t_{i_1}},(C_{p^{i_2}})^{t_{i_2}}) & \mathrm{End}((C_{p^{i_2}})^{t_{i_2}}) & \cdots  \\
        \vdots & \vdots & \ddots \\
    \end{pmatrix}.
    $$
    Since $M$ has a $k$-basis
    $$\mathcal{B}:=\{1-g_{ij}+J^2_{kP}\mid(i,j)\in\Gamma\}$$
    and $1-g_{ij}g_{i'j'}=(1-g_{ij})+(1-g_{i'j'})-(1-g_{ij})(1-g_{i'j'})\in (1-g_{ij})+(1-g_{i'j'})+J^2_{kP}$, the action of $h\in H$ on $M$ is represented by the following block diagonal $n\times n$-matrix with respect to $\mathcal{B}$:
    $$
    \overline{A^h}=
    \begin{pmatrix}
        \overline{A^h_{i_1}} & 0 & \cdots \\
        0 & \overline{A^h_{i_2}} & \cdots \\
        \vdots & \vdots & \ddots \\
    \end{pmatrix}
    \in GL_n(\mathbb{F}_p)\subset GL_n(k)\cong GL(M),
    $$
    where $\overline{(-)}$ denotes a matrix whose entries are considered modulo $p$. Since $kH$ is semisimple and $H$ is abelian, the matrices $\{\overline{A^h}\}_{h\in H}$ are simultaneously diagonalizable. Thus, we can take a set of elements $\{m_{ij}\in W_i\}_{(i,j)\in\Gamma}$ such that $M=\bigoplus_{(i,j)\in\Gamma}kH\cdot\pi(m_{ij})$ as a $kH$-module, each $kH\cdot\pi(m_{ij})$ is a simple $kH$-module and $\bigoplus_{1\leq j\leq t_i}kH\cdot\pi(m_{ij})$ is isomorphic to a $kH$-module defined by a group homomorphism $\overline{A_i}:H\ni h\mapsto\overline{A^h_i}\in GL_{t_i}(\mathbb{F}_p)$.\\
    \indent Take $\chi_{ij}\in\mathrm{Irr}\,H$ such that $kH\cdot\pi(m_{ij})\cong S_{\chi_{ij}}$. For $\chi\in\mathrm{Irr}\,H$, let $e_\chi:=|H|^{-1}\sum_{h\in H}\chi(h^{-1})h$ be the primitive idempotent corresponding to $\chi$. Then we have
    \begin{align*}
        e_\mu\pi(m_{ij})e_\lambda & = \frac{1}{|H|}\sum_{h\in H}\mu(h^{-1})(h\pi(m_{ij})h^{-1})he_\lambda\\
        & =\frac{1}{|H|}\sum_{h\in H}\mu(h^{-1})\chi_{ij}(h)\pi(m_{ij})\lambda(h)e_\lambda\\
        & =\frac{1}{|H|}\sum_{h\in H}(\mu\otimes\chi^*_{ij}\otimes\lambda^*)(h^{-1})\pi(m_{ij})e_\lambda\\
        & =\left\{
        \begin{array}{ll}
            \pi(m_{ij})e_\lambda & (\mu=\chi_{ij}\otimes\lambda), \\
            0 & (\mu\neq\chi_{ij}\otimes\lambda).
        \end{array}
        \right.
    \end{align*}
    Since a set $\{m_{ij}\}_{1\leq j\leq t_i}$ is a $k$-basis of $W_i$ and $J_{kP_i}=W_i\oplus J^2_{kP_i}$ as a $kH$-module, we have
    $$
    e_\mu m_{ij}e_\lambda=
    \left\{
        \begin{array}{ll}
            m_{ij}e_\lambda & (\mu=\chi_{ij}\otimes\lambda), \\
            0 & (\mu\neq\chi_{ij}\otimes\lambda).
        \end{array}
    \right.
    $$
    Hence, the set of arrows emanating from a vertex $\lambda\in\mathrm{Irr}\,H$ is
    $$\{\alpha_{ij\lambda}:=e_{\chi_{ij}\otimes\lambda}m_{ij}e_\lambda=m_{ij}e_\lambda:\lambda\rightarrow\chi_{ij}\otimes\lambda\}_{(i,j)\in\Gamma}.$$
    Since $m_{ij}\in J_{kP_i}$ and so $m_{ij}^{p^i}=0$, the following equations hold: 
    \begin{align*}
        e_{\chi_{ij}\otimes\chi_{i'j'}\otimes\lambda}m_{ij}e_{\chi_{i'j'}\otimes\lambda}m_{i'j'}e_\lambda & = m_{ij}m_{i'j'}e_\lambda \\
        & = m_{i'j'}m_{ij}e_\lambda \\
        & = e_{\chi_{i'j'}\otimes\chi_{ij}\otimes\lambda}m_{i'j'}e_{\chi_{ij}\otimes\lambda}m_{ij}e_\lambda,
    \end{align*}
    $$e_{\chi_{ij}^{\otimes p^i}\otimes\lambda}m_{ij}\cdots e_{\chi_{ij}^{\otimes 2}\otimes\lambda}m_{ij}e_{\chi_{ij}\otimes\lambda}m_{ij}e_\lambda=m^{p^i}_{ij}e_\lambda=0.$$
    Thus, there exists a surjective algebra homomorphism $kQ/I\twoheadrightarrow kG$. However, we can easily see that the dimensions of $kQ/I$ and $kG$ are the same since the number of equivalence classes of paths in $kQ/I$ emanating from each vertex is $|P|$ by the above relations. Therefore, $kG$ is isomorphic to $kQ/I$.
\end{proof}

\vspace{3mm}

\begin{ex}
    Let $p=3$, $P:=(C_3)^2\times C_9$ with generators $a,b\in (C_3)^2,\,c\in C_9$, and $H:=C_4$ with a generator $d$. Recall that $H$ has four irreducible characters $\chi_i$ ($i=0,1,2,3$) sending $d$ to $\zeta^i$, where $\zeta$ denotes a primitive fourth root of unity in $k$. We define $G:=P\rtimes H$ via the following action of $H$ on $P$:
    $$dad^{-1}=ab,\,dbd^{-1}=ab^2,\,dcd^{-1}=c^8,$$
    which is represented by the following matrix with respect to the generators $a,b,c$:
    $$
    A^d=
    \begin{pmatrix}
        1 & 1 & 0  \\
        1 & 2 & 0  \\
        0 & 0 & 8 \\
    \end{pmatrix}.
    $$
    Hence, as in the proof of Proposition \ref{abelquiv}, the action of $d\in H$ on $M$ is represented by the following matrix with respect to a $k$-basis $\{1-a+J^2_{kP},\,1-b+J^2_{kP},\,1-c+J^2_{kP}\}$:
    $$
    \overline{A^d}=
    \begin{pmatrix}
        1 & 1 & 0  \\
        1 & 2 & 0  \\
        0 & 0 & 2 \\
    \end{pmatrix},
    $$
    which is congruent over $k$ to the matrix
    $$
    \begin{pmatrix}
        \zeta & 0 & 0  \\
        0 & \zeta^3 & 0  \\
        0 & 0 & \zeta^2 \\
    \end{pmatrix}.
    $$
    Therefore, by Proposition \ref{abelquiv}, $kG$ is isomorphic to $kQ/I$, where
    $$
    \begin{tikzcd}
         & \chi_0 \ar["\alpha_{10}",dd,shift left=0.5ex] \ar[near start,"\alpha_{20}" sloped,rrdd,shift left=0.5ex] \ar["\alpha_{30}",rr,shift left=0.5ex]  & & \chi_3 \ar["\alpha_{13}",ll,shift left=0.5ex] \ar[near start,"\alpha_{23}"' sloped,lldd,shift left=0.5ex] \ar["\alpha_{33}",dd,shift left=0.5ex] & \\
        Q:= &  &  &  & \\
         & \chi_1 \ar["\alpha_{11}",rr,shift left=0.5ex] \ar[near start,"\alpha_{21}" sloped,rruu,shift left=0.5ex] \ar["\alpha_{31}",uu,shift left=0.5ex] &  & \chi_2 \ar["\alpha_{12}",uu,shift left=0.5ex] \ar[near start,"\alpha_{22}"' sloped,lluu,shift left=0.5ex] \ar["\alpha_{32}",ll,shift left=0.5ex] & ,\\
    \end{tikzcd}
    $$
    $$I:=\langle\alpha_{j,i+l}\alpha_{il}-\alpha_{i,j+l}\alpha_{jl},\,\alpha_{1l}^3,\,\alpha_{3l}^3,\,\alpha_{2l}^9\mid i,j,l\in\mathbb{Z}/4\mathbb{Z}\rangle.$$
\end{ex}

\vspace{4mm}

\indent Our aim is to give the condition for $kG$ to be $\tau$-tilting finite when $P$ and $H$ are both abelian.\\

\begin{prop}[{\cite[(8.5)-(6), (24.4) and (24.6)]{Asch}}]\label{Asch}
    It holds that $[P,H]\trianglelefteq G$ and $P=C_P(H)[P,H]$. Moreover, if $P$ is abelian, then $P=C_P(H)\times[P,H]$. \\
\end{prop}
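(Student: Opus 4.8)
The plan is to establish the three assertions separately, using only elementary commutator identities together with the standard machinery of coprime group actions; since the statement is a classical group-theoretic fact, no input from $\tau$-tilting theory is needed.

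First I would prove $[P,H]\trianglelefteq G$. Since $P\trianglelefteq G$, for $p\in P$ and $h\in H$ the commutator $[p,h]=p^{-1}p^{h}$ lies in $P$, so $[P,H]\leq P$. To see that $P$ normalizes $[P,H]$, I would use the identity $[pq,h]=[p,h]^{q}[q,h]$, which rearranges to $[p,h]^{q}=[pq,h]\,[q,h]^{-1}$ for all $p,q\in P$ and $h\in H$; hence every $P$-conjugate of a generator of $[P,H]$ is again a product of generators, so $[P,H]$ is $P$-invariant. To see that $H$ normalizes $[P,H]$, I would use $[p,h]^{h'}=[p^{h'},h^{h'}]$ together with $p^{h'}\in P$ (as $P\trianglelefteq G$) and $h^{h'}\in H$. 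Since $G=\langle P,H\rangle$, these two facts give $[P,H]\trianglelefteq G$.

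Next, for the factorization $P=C_P(H)[P,H]$, I would set $N:=[P,H]$ and observe that $H$ acts trivially on $P/N$, so $C_{P/N}(H)=P/N$. The key input is the coprime-action lemma: if $H$ acts on $P$ with $\gcd(|H|,|P|)=1$ and $N\trianglelefteq P$ is $H$-invariant, then the natural map $C_P(H)\to C_{P/N}(H)$ is surjective; applied to $N=[P,H]$ this yields $C_P(H)[P,H]=P$ at once. I would prove the lemma by induction on $|N|$. For abelian $N$: a class $xN\in C_{P/N}(H)$ satisfies $[x,h]\in N$ for all $h$, and $h\mapsto[x,h]$ is then a $1$-cocycle of $H$ with values in $N$; since $\gcd(|H|,|N|)=1$ we have $H^{1}(H,N)=0$, so it is a coboundary, say $[x,h]=[n,h]$ for some $n\in N$ and all $h$, and then a short computation (using that $N$ is abelian) gives $[xn^{-1},h]=1$, so $xn^{-1}\in C_P(H)$ maps onto $xN$. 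For nonabelian $N$ I would factor the map through $P/[N,N]$, using that $[N,N]$ is characteristic in $N$ (hence $H$-invariant and normal in $P$) and that both $|[N,N]|$ and $|N/[N,N]|$ are smaller than $|N|$ to apply the inductive hypothesis twice.

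Finally, for the refinement $P=C_P(H)\times[P,H]$ when $P$ is abelian, I would give a self-contained argument that produces the decomposition together with its directness: writing $P$ additively, multiplication by $|H|$ is an automorphism of $P$ since $\gcd(|H|,|P|)=1$, so $e:=\tfrac1{|H|}\sum_{h\in H}h$ is a well-defined endomorphism of $P$; one checks $e^{2}=e$, $\operatorname{im}e=C_P(H)$ (because $e$ lands in $C_P(H)$ and restricts to the identity there), and $\ker e=[P,H]$, whence $P=\operatorname{im}e\oplus\ker e=C_P(H)\times[P,H]$. The step I expect to be the main obstacle is the coprime-action lemma in the nonabelian case: the bookkeeping must keep every auxiliary subgroup $H$-invariant and normal in $P$, which is why one passes only to characteristic subgroups of $N$; the cohomology vanishing $H^{1}(H,N)=0$ for abelian $N$ is the single point where coprimality is genuinely essential.
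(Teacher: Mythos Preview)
Your proof is correct. Note, however, that the paper does not supply its own argument for this proposition: it is quoted verbatim from Aschbacher's textbook (items (8.5)--(6), (24.4), (24.6)) and used as a black box, so there is nothing in the paper to compare against beyond the citation itself.

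That said, a couple of remarks on your write-up. The normality of $[P,H]$ and the factorisation $P=C_P(H)[P,H]$ via the coprime-action lemma are exactly the standard arguments one finds in Aschbacher or Gorenstein; your cohomological proof of the surjectivity of $C_P(H)\to C_{P/N}(H)$ for abelian $N$, followed by the reduction through $[N,N]$, is the usual route. For the abelian refinement $P=C_P(H)\times[P,H]$, your averaging-idempotent argument is a particularly clean choice: it yields $\operatorname{im}e=C_P(H)$ and $\ker e=[P,H]$ simultaneously (the inclusion $\ker e\subseteq[P,H]$ follows from $p-e(p)=\tfrac{1}{|H|}\sum_{h}(p-h\cdot p)\in[P,H]$), so you get both the factorisation and the trivial intersection in one stroke, without needing to invoke Part~2 again. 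This is arguably tidier than the more common two-step argument (first $P=C_P(H)[P,H]$, then a separate verification that $C_P(H)\cap[P,H]=1$ via the three-subgroups lemma or a direct computation). One cosmetic point: the parenthetical ``using that $N$ is abelian'' in your verification of $[xn^{-1},h]=1$ is slightly misplaced---that particular calculation goes through in general; abelianness of $N$ is needed earlier, to make $h\mapsto[x,h]$ a genuine $1$-cocycle.
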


\begin{cor}\label{red}
    If $P$ is abelian, then there exists a poset isomorphism
    $$\mathrm{s\tau\textrm{-}tilt}\,kG\cong\mathrm{s\tau\textrm{-}tilt}\,k([P,H]\rtimes H).$$
\end{cor}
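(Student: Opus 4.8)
The plan is to exploit the decomposition $P = C_P(H) \times [P,H]$ from Proposition~\ref{Asch} to split the group algebra $kG$ as a tensor product, and then to reduce the study of support $\tau$-tilting modules over such a tensor product to one of the factors. Write $C := C_P(H)$ and $P' := [P,H]$, so that $G = P \rtimes H \cong C \times (P' \rtimes H)$, using that $H$ acts trivially on $C$ and normalizes $P'$ by Proposition~\ref{Asch}. Consequently $kG \cong kC \otimes k(P' \rtimes H)$ as $k$-algebras. Here $kC$ is a local algebra (being the group algebra of a $p$-group over $k$), so $kC$ is a connected algebra with a unique simple module, and moreover $kC$ is a commutative local algebra; I would note in particular that $kC/J_{kC} \cong k$.

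First I would record the tensor-product decomposition $kG \cong kC \otimes k(P'\rtimes H)$ precisely, checking that the isomorphism $G \cong C \times (P' \rtimes H)$ is indeed an isomorphism of groups (the only content is that $C$ is central in $G$ because it is a direct factor of $P$ fixed pointwise by $H$, and that $C$ is normal with complement $P' \rtimes H$). Second, I would invoke the general principle that if $A$ is a connected algebra with $A/J_A \cong k$ (such as $A = kC$), then tensoring with $A$ does not change the poset of support $\tau$-tilting modules of the other factor: there is a poset isomorphism $\mathrm{s\tau\textrm{-}tilt}\,(A \otimes B) \cong \mathrm{s\tau\textrm{-}tilt}\,B$. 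Applying this with $A = kC$ and $B = k(P' \rtimes H) = k([P,H]\rtimes H)$ gives exactly the claimed isomorphism $\mathrm{s\tau\textrm{-}tilt}\,kG \cong \mathrm{s\tau\textrm{-}tilt}\,k([P,H]\rtimes H)$.

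The heart of the argument — and the step I expect to need the most care — is the reduction $\mathrm{s\tau\textrm{-}tilt}\,(kC \otimes B) \cong \mathrm{s\tau\textrm{-}tilt}\,B$. One clean way is to pass through the lattice of torsion classes, or equivalently through two-term silting complexes in $K^b(\textrm{-proj})$: since $kC$ is a local $k$-algebra, the indecomposable projective $kC$-modules and simple $kC$-modules coincide up to radical, and one shows that the minimal projective presentations over $kC \otimes B$ are obtained from those over $B$ by applying $kC \otimes -$, which is exact on projectives; hence the bijection $(M,P) \mapsto (kC \otimes M, kC \otimes P)$ between support $\tau$-tilting pairs is well-defined and order-preserving in both directions. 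Alternatively one can cite that $kC \otimes -$ induces an equivalence on the relevant bounded homotopy categories up to the idempotent completion, matching two-term silting objects. In any case, the essential input is the locality of $kC$ — this is where the hypothesis that $P$ is abelian (so that $C_P(H)$ is a genuine direct factor, by Proposition~\ref{Asch}) is used — together with the functoriality of $\mathrm{s\tau\textrm{-}tilt}$ established in \cite{AIR}. Finally, I would remark that the same reasoning shows $kG$ is $\tau$-tilting finite if and only if $k([P,H]\rtimes H)$ is, which is the form in which the corollary will be used in the sequel.
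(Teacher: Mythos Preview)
Your proposal is correct and follows essentially the same route as the paper: both use Proposition~\ref{Asch} to identify $C_P(H)$ as a central $p$-subgroup (indeed a direct factor) of $G$, and then invoke a reduction principle saying that killing such a central $p$-part does not change $\mathrm{s}\tau\textrm{-}\mathrm{tilt}$. The only difference is packaging: the paper cites the black-box reduction theorem \cite[Theorem~3.7]{H} for central $p$-subgroups, whereas you unpack it via the tensor decomposition $kG\cong kC\otimes k([P,H]\rtimes H)$ --- your key step ``$\mathrm{s}\tau\textrm{-}\mathrm{tilt}(A\otimes B)\cong\mathrm{s}\tau\textrm{-}\mathrm{tilt}\,B$ for $A$ local'' is most cleanly justified by noting that $J_A\otimes B$ is a centrally generated ideal in the radical and applying the reduction theorem of \cite{EJR}, rather than by the hand-wavier projective-presentation sketch you give.
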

\begin{proof}
    The assertion follows from Proposition \ref{Asch} and \cite[Theorem 3.7]{H} because $C_P(H)$ is a central $p$-subgroup of $G$.
\end{proof}

\vspace{3mm}

\indent It follows from Proposition \ref{Asch} and Corollary \ref{red} that when we consider $\tau$-tilting finiteness of $kG$, we can assume that the centralizer $C_P(H)$ of $H$ in $P$ is trivial by replacing $P$ with $[P,H]$.\\

\begin{lem}\label{noloop}
    If $H$ is abelian and $C_P(H)=1$, then $\mathrm{Ext}_{kG}^1(S_\chi,S_\chi)=0$ for all $\chi\in\mathrm{Irr}\,H$. In particular, the Gabriel quiver of $kG$ has no loops.
\end{lem}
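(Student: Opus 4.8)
The plan is to show that the Gabriel quiver of $kG$ has no loops: since $H$ is abelian, $kG$ is basic, so the number of loops at a vertex $\chi\in\mathrm{Irr}\,H$ is $\dim_k\mathrm{Ext}^1_{kG}(S_\chi,S_\chi)$, and both assertions of the lemma amount to this number being $0$ for every $\chi$.

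By Proposition \ref{genquiv} the number of loops at $\chi$ equals $\dim_k\mathrm{Hom}_{kH}(S_\chi,M\otimes S_\chi)$, where $M=J_{kP}/J^2_{kP}$ carries the conjugation action. As $H$ is abelian, $kH$ is semisimple and $M$ is a direct sum of one-dimensional modules $S_\psi$; then $M\otimes S_\chi$ is the corresponding sum of the $S_\psi\otimes S_\chi\cong S_{\psi\chi}$, and a summand is isomorphic to $S_\chi$ exactly when $\psi$ is trivial, so $\dim_k\mathrm{Hom}_{kH}(S_\chi,M\otimes S_\chi)$ is the multiplicity of $S_{\mathrm{triv}}$ in $M$, i.e.\ $\dim_k M^H$. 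Hence it suffices to prove $M^H=0$, uniformly in $\chi$.

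For this I would invoke the standard description of $M$ as the $kH$-module $k\otimes_{\mathbb{F}_p}(P/\Phi(P))$, where $\Phi(P)=P^p[P,P]$ is the Frattini subgroup and $P/\Phi(P)$ is viewed additively as an $\mathbb{F}_pH$-module by conjugation. Indeed, $g\mapsto(1-g)+J^2_{kP}$ is a group homomorphism $P\to(M,+)$, since $1-gh\equiv(1-g)+(1-h)$ modulo $J^2_{kP}$; it is $H$-equivariant, its target is abelian, and it sends $g^p$ to $p\cdot\overline{1-g}=0$, so it factors through $P/\Phi(P)$, and extending scalars gives a $kH$-linear surjection $k\otimes_{\mathbb{F}_p}(P/\Phi(P))\twoheadrightarrow M$. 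Because $p\nmid|H|$, the fixed-point functor $(-)^H$ is exact (it is given by the averaging idempotent $|H|^{-1}\sum_{h\in H}h\in\mathbb{F}_pH$), so $M^H$ is a quotient of $(k\otimes_{\mathbb{F}_p}(P/\Phi(P)))^H=k\otimes_{\mathbb{F}_p}C_{P/\Phi(P)}(H)$; thus it remains to check that $C_{P/\Phi(P)}(H)$ is trivial.

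Here the hypothesis enters: by Proposition \ref{Asch}, $P=C_P(H)[P,H]=[P,H]$ since $C_P(H)=1$, so the image of $[P,H]$ in $P/\Phi(P)$, which is $[P/\Phi(P),H]$, is all of $P/\Phi(P)$; applying Proposition \ref{Asch} once more to the abelian group $P/\Phi(P)$ gives $P/\Phi(P)=C_{P/\Phi(P)}(H)\times[P/\Phi(P),H]=C_{P/\Phi(P)}(H)\times P/\Phi(P)$, forcing $C_{P/\Phi(P)}(H)=1$. Therefore $M^H=0$, the quiver has no loops, and the lemma follows. There is no essential obstacle; the two points that merit care are the $H$-equivariant identification $J_{kP}/J^2_{kP}\cong k\otimes_{\mathbb{F}_p}(P/\Phi(P))$ and the reduction from $C_P(H)=1$ to $C_{P/\Phi(P)}(H)=1$ via two applications of Proposition \ref{Asch}.
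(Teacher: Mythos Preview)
Your argument is correct, but it takes a different route from the paper. The paper first observes, using that $H$ is abelian, that $\mathrm{Ext}^1_{kG}(S_\chi,S_\chi)\cong\mathrm{Ext}^1_{kG}(k_G,k_G)$, and then invokes a cohomological characterization (Landrock) reducing the vanishing of $\mathrm{Ext}^1_{kG}(k_G,k_G)$ to the nonexistence of a normal subgroup of index $p$; the latter is checked by showing $[G,G]=P$ from $C_P(H)=1$. Your approach instead stays inside the quiver description already established: via Proposition~\ref{genquiv} you identify the loop count with $\dim_k M^H$, realise $M$ as (a quotient of) $k\otimes_{\mathbb{F}_p}(P/\Phi(P))$, and deduce $C_{P/\Phi(P)}(H)=1$ from $C_P(H)=1$ by two applications of Proposition~\ref{Asch}. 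Your route avoids the external citation and is entirely self-contained given the paper's setup; the paper's route is a touch shorter once one accepts the characterization of $\mathrm{Ext}^1_{kG}(k_G,k_G)$. A minor streamlining of your last step: the identity $C_{P/\Phi(P)}(H)=C_P(H)\Phi(P)/\Phi(P)$ (used by the paper in Corollary~\ref{subcor}, citing \cite[(18.7)-(4)]{Asch}) gives $C_{P/\Phi(P)}(H)=1$ in one line and replaces your two-step application of Proposition~\ref{Asch}.
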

\begin{proof}
    Since $H$ is abelian, we have $\mathrm{Ext}_{kG}^1(S_\chi,S_\chi)\cong\mathrm{Ext}_{kG}^1(k_G,S^*_\chi\otimes S_\chi)\cong\mathrm{Ext}_{kG}^1(k_G,k_G)$, where $k_G$ denotes the trivial $kG$-module. Hence, by \cite[Corollary 10.13]{L}, it is sufficient to show that there exist no normal subgroups of $G$ with index $p$. \\
    \indent Assume that there exists a normal subgroup $N$ of $G$ with index $p$. By Proposition \ref{Asch} and the assumption that $C_P(H)=1$, we have $[P,H]=[H,P]=P$. Since $[P,P]\leq P$ and $[H,H]=1$, it follows that $[G,G]=P$. Hence, we have $N\geq [G,G]=P$ because $G/N$ is abelian. Therefore, it follows that $p=(G:N)\mid (G:P)=|H|$. However, this contradicts the assumption that $H$ is a $p'$-group.
\end{proof}

\vspace{3mm}

\begin{lem}\label{p2}
    Assume that $p=2$, $P$ and $H$ are abelian, and $C_P(H)=1$. Then $t_i\neq1$ for all $i\geq1$.
\end{lem}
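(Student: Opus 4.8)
The plan is to argue by contradiction and reduce everything to a statement about automorphisms of cyclic $2$-groups. Suppose $t_i=1$ for some $i\geq 1$. In the chosen $H$-invariant decomposition $P=\prod_{j\geq1}P_j$ with $P_j=(C_{p^j})^{t_j}$, the summand $P_i$ is then the nontrivial cyclic group $C_{2^i}$, and since the decomposition is $H$-invariant, conjugation gives an action of $H$ on $P_i$, i.e.\ a group homomorphism $H\to\mathrm{Aut}(P_i)\cong\mathrm{Aut}(C_{2^i})$.

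The crucial point, and the one place where $p=2$ is genuinely used, is that $|\mathrm{Aut}(C_{2^i})|=\varphi(2^i)=2^{i-1}$ is a power of $2$. Because $H$ is a $2'$-group, any homomorphism from $H$ to the $2$-group $\mathrm{Aut}(C_{2^i})$ is trivial, so $H$ centralizes $P_i$; hence $P_i\leq C_P(H)$. The hypothesis $C_P(H)=1$ then forces $P_i=1$, contradicting $t_i=1$. Therefore $t_i\neq1$ for every $i\geq1$.

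There is no real obstacle: the only thing to check carefully is that $\mathrm{Aut}(C_{2^i})$ is indeed a $2$-group (it is trivial for $i=1$, $C_2$ for $i=2$, and $C_2\times C_{2^{i-2}}$ for $i\geq3$), and this is exactly the ingredient that fails for odd primes, where $\mathrm{Aut}(C_{p^i})$ has a nontrivial $p'$-part and the analogous statement is false. I would not try to route the argument through the reduced matrices $\overline{A^h_i}\in GL_{t_i}(\mathbb{F}_p)$ of Proposition \ref{abelquiv}, since for $t_i=1$ one has $GL_1(\mathbb{F}_2)=1$ and the reduction modulo $2$ throws away precisely the information needed; working directly with $\mathrm{Aut}(P_i)$ is cleaner.
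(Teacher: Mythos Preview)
Your proof is correct and takes a more direct route than the paper's. You argue purely group-theoretically: $\mathrm{Aut}(C_{2^i})$ is a $2$-group, so the $2'$-group $H$ must act trivially on $P_i$, forcing $P_i\leq C_P(H)=1$. The paper instead works through the Gabriel quiver: when $t_{i'}=1$, the one-dimensional $kH$-summand $kH\cdot\pi(m_{i'1})$ of $M$ is defined over $\mathbb{F}_2$ (as in the proof of Proposition~\ref{abelquiv}) and hence trivial since $GL_1(\mathbb{F}_2)=1$; this makes $\chi_{i'1}$ the trivial character, so the arrows $\alpha_{i'1\lambda}$ are loops, contradicting Lemma~\ref{noloop}. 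Note that the route through the reduced matrices that you caution against in your final paragraph is precisely the one the paper takes: the ``lost information'' is not a problem because the contradiction is obtained via Lemma~\ref{noloop} (whose proof already encodes the hypothesis $C_P(H)=1$) rather than by lifting back to an action on $P_i$. Your argument has the advantage of being self-contained, bypassing both Proposition~\ref{abelquiv} and Lemma~\ref{noloop}; the paper's has the advantage of staying inside the quiver framework used throughout the rest of Section~3.
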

\begin{proof}
    If there exists $i'\geq1$ such that $t_{i'}=1$, then $kH\cdot\pi(m_{i'1})\in\mathbb{F}_pH\textrm{-mod}$ in the proof of Proposition \ref{abelquiv} must be the trivial $kH$-module because $p=2$. Hence, by Proposition \ref{abelquiv}, the Gabriel quiver of $kG$ has loops $\{\alpha_{i'1\lambda}\}_{\lambda\in\mathrm{Irr}\,H}$, which contradicts Lemma \ref{noloop}.
\end{proof}

\vspace{3mm}

\indent Surprisingly, $\tau$-tilting finiteness of $kG$ is determined by the $p$-hyperfocal subgroup of $G$, as we will see in Theorem \ref{mainthm}.\\

\begin{lem}\label{hyp}
    It holds that $[P,H]$ is the $p$-hyperfocal subgroup of $G$.
\end{lem}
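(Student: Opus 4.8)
The plan is to unwind the definition of the $p$-hyperfocal subgroup directly for $G = P \rtimes H$. Recall that the $p$-hyperfocal subgroup is $S \cap O^p(G)$ for a Sylow $p$-subgroup $S$ of $G$. Since $P$ is itself a normal $p$-subgroup of $G$ and $G/P \cong H$ is a $p'$-group, $P$ is the unique Sylow $p$-subgroup of $G$; hence the $p$-hyperfocal subgroup is $P \cap O^p(G)$. So the first step is to identify $O^p(G)$, and then to show $P \cap O^p(G) = [P,H]$.

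First I would show $[P,H] \le O^p(G)$. By Proposition \ref{Asch}, $[P,H] \trianglelefteq G$, and $G/[P,H] \cong (P/[P,H]) \rtimes H$ with $H$ acting trivially on $P/[P,H]$ (by definition of $[P,H]$), so $G/[P,H] \cong (P/[P,H]) \times H$. This quotient is not a $p$-group unless $H = 1$, so this alone does not give $[P,H] = O^p(G)$; instead I use it together with the next observation. Since $O^p(G)$ is the smallest normal subgroup with $p$-group quotient, and a normal subgroup $N$ has $p$-group quotient iff $N$ contains a Sylow $p'$-subgroup's worth of the structure — more precisely, $O^p(G)$ is generated by all $p'$-elements of $G$. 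Every element of $H$ is a $p'$-element, so $H \le O^p(G)$; and for $x \in P$, $h \in H$, the commutator $[x,h] = x^{-1} h^{-1} x h = x^{-1} \cdot (h^{-1} x h) \cdot (h^{-1} \cdot \text{something})$... cleaner: $[x,h] = x^{-1}(h^{-1}xh)$, and both $x^{-1}$-conjugates lie in the normal subgroup generated by $H$... Actually the cleanest route: $O^p(G) \trianglelefteq G$ contains $H$, so it contains $[P, H]$ since $[P,H]$ is generated by elements $x^{-1}x^h$ with $x \in P$, $h \in H \le O^p(G)$, and $x^{-1}x^h = x^{-1}(x^h)$ where $x^h \in O^p(G)^x \cdot$... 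I would instead argue: the subgroup $[P,H]H$ is normal in $G$ (as $[P,H] \trianglelefteq G$ and $H$ normalizes it, with $[P,H]H/[P,H]$ being all of $H$'s image), and $G/[P,H]H \cong P/[P,H]$ is a $p$-group, so $O^p(G) \le [P,H]H$. Conversely $H \le O^p(G)$ forces $[P,H] = [P, H] \le O^p(G)$ since $O^p(G) \trianglelefteq G$ and $[P, O^p(G)] \le O^p(G)$, whence $[P,H] \le [P, O^p(G)] \le O^p(G)$. Therefore $O^p(G) = [P,H]H$.

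Finally, $P \cap O^p(G) = P \cap [P,H]H = [P,H]$: the inclusion $\supseteq$ is clear since $[P,H] \le P$, and for $\subseteq$, any element of $P \cap [P,H]H$ written as $gh$ with $g \in [P,H] \le P$, $h \in H$ must have $h \in P \cap H = 1$, so it lies in $[P,H]$. This gives the claim. The main obstacle, such as it is, is organizing the two inclusions $O^p(G) \supseteq [P,H]$ and $O^p(G) \subseteq [P,H]H$ cleanly and making sure the normality statements invoked are exactly those furnished by Proposition \ref{Asch}; no serious difficulty is expected, and no appeal to the abelianness of $P$ or $H$ is needed here.
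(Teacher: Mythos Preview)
Your argument is correct and follows essentially the same route as the paper: both identify $O^p(G)=[P,H]H$ by checking the two inclusions (the paper uses that $[x,h]=(xhx^{-1})h^{-1}$ is a product of $p'$-elements, while you use $[P,H]\le [P,O^p(G)]\le O^p(G)$ via normality --- an equally standard variant), and then intersect with $P$ to get $[P,H]$. The exploratory detours in your write-up should be trimmed, but the final chain of reasoning matches the paper's proof.
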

\begin{proof}
    For any $x\in P$ and $y\in H$, we have $xyx^{-1}y^{-1}\in O^p(G)$ since $xyx^{-1}$ and $y^{-1}$ are $p'$-elements (recall that $O^p(G)$ is generated by all $p'$-elements of $G$). Moreover, we also have $H\leq O^p(G)$ since $H$ is a $p'$-group. Hence, $[P,H]\rtimes H\leq O^p(G)$. By Proposition \ref{Asch}, $[P,H]\rtimes H$ is a normal subgroup of $G$ of $p$-power index. Thus, $O^p(G)\leq [P,H]\rtimes H$. Therefore, the $p$-hyperfocal subgroup of $G$ is $P\cap O^p(G)=P\cap ([P,H]\rtimes H)=[P,H].$
\end{proof}

\vspace{3mm}

\begin{thm}\label{mainthm}
    Let $P$ be an abelian $p$-group, $H$ an abelian $p'$-subgroup acting on $P$ and $G:=P\rtimes H$. Denote by $R$ the $p$-hyperfocal subgroup of $G$. Then $kG$ is $\tau$-tilting finite if and only if one of the following holds:
    \begin{enumerate}
        \setlength{\parskip}{0cm}
        \setlength{\itemsep}{0cm}
        \item $p=2$ and $R$ is trivial or isomorphic to $C_2\times C_2$.
        \item $p\geq3$ and $R$ is trivial or cyclic.
    \end{enumerate}
\end{thm}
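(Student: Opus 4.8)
The plan is to treat the two implications separately; the "if" direction is immediate, while the "only if" direction amounts to exhibiting a zigzag cycle in the quiver of $kG$ from Proposition \ref{abelquiv} and applying Proposition \ref{zig}.

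For the "if" direction: if $R$ is cyclic, in particular if $R$ is trivial, then $kG$ is $\tau$-tilting finite by Proposition \ref{tfhyp}(a); and if $p=2$ and $R\cong C_2\times C_2$, then, $C_2\times C_2$ being the dihedral group of order $4$, $kG$ is $\tau$-tilting finite by Proposition \ref{tfhyp}(b). (By Lemma \ref{p2}, when $p=2$ the cyclic alternative in (a) can only be the trivial group, so (a) and (b) together give exactly the list in the statement.)

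For the "only if" direction I prove the contrapositive. By Corollary \ref{red} and Lemma \ref{hyp} I may replace $P$ by $[P,H]=R$, so from now on $C_P(H)=1$ and $R=P$. The cases to be shown $\tau$-tilting infinite are: $p=2$ with $P\notin\{1,\,C_2\times C_2\}$, and $p\geq 3$ with $P$ nontrivial and non-cyclic. In either case $P$ has at least two cyclic direct factors — for $p=2$ this uses Lemma \ref{p2}, which forbids $t_i=1$ — so, in the notation of Proposition \ref{abelquiv}, there are at least two generators $g_{ij},g_{i'j'}$ whose associated characters $\chi_{ij},\chi_{i'j'}$ are nontrivial by Lemma \ref{noloop}. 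I will choose two such generators and build a zigzag cycle with distinct vertices in $Q$ to which Proposition \ref{zig} applies. The cases are: (1) if $\chi_{ij}=\chi_{i'j'}=:\chi$, the two parallel arrows $\lambda\to\chi\lambda$ form a length-$2$ zigzag cycle on the distinct vertices $\lambda,\chi\lambda$, and Proposition \ref{zig}(a) applies; (2) if $\chi:=\chi_{ij}\neq\chi_{i'j'}=:\chi'$ and the subgroup $A:=\langle\chi,\chi'\rangle\leq\mathrm{Irr}\,H$ is not generated by $\psi:=\chi\chi'^{-1}$ — in particular whenever $A$ is non-cyclic — then the zigzag path alternating between arrows of type $\chi$ and type $\chi'$ closes after $2\cdot\mathrm{ord}(\psi)$ arrows into a zigzag cycle whose valleys form one coset of $\langle\psi\rangle$ and whose peaks form its translate by $\chi$, these being disjoint exactly because $\chi\notin\langle\psi\rangle$; so again Proposition \ref{zig}(a) applies; (3) if $A=\langle\psi\rangle$ is cyclic (of order $\geq 3$, since $\chi\neq\chi'$ are both nontrivial), I instead build an \emph{odd}-length zigzag cycle whose first and last arrows have the same type $\chi_{ij}$, so that the composite $a_1a_n$ equals the path $\alpha_{ij}^2$, which does not occur among the defining relations $\alpha_{ab}\alpha_{cd}-\alpha_{cd}\alpha_{ab}$ and $\alpha_{ab}^{p^a}$ of Proposition \ref{abelquiv} provided $p^i\geq 3$; Proposition \ref{zig}(b) then applies. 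Finally, when $p=2$ and every generator of $P$ is an involution — so the condition $p^i\geq 3$ in (3) fails — one has $P\cong C_2^n$ with $n\geq 3$ (as $C_2\times C_2$ is excluded and Lemma \ref{p2} forbids components of size $1$), hence at least three pairwise-distinct characters when case (1) does not apply; in this situation one locates an \emph{even} zigzag cycle (of length $4$ or $6$) directly inside the circulant-type subquiver spanned by these arrow types, sidestepping condition (b) altogether.

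I expect case (3), and its $p=2$ refinement, to be the main obstacle. Producing the odd zigzag cycle with distinct vertices and matching first/last types requires an elementary but genuinely case-by-case construction (a length-$3$ cycle when the exponent $s$ in $\chi=\psi^s$ is near an end of its range, longer alternating cycles otherwise, with the symmetry $\chi\leftrightarrow\chi'$ used to halve the work); and the combinatorial argument for the $p=2$ all-involution case needs its own care. The role of the inequality $p^i\geq 3$ is precisely what isolates the exceptional case $p=2$, $R\cong C_2\times C_2$: there the only available odd zigzag cycle gives $a_1a_n=\alpha_{ij}^2=0$, a defining relation, and no even zigzag cycle exists, consistent with $k[(C_2\times C_2)\rtimes H]$ being $\tau$-tilting finite.
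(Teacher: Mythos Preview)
Your overall architecture matches the paper's: reduce via Corollary~\ref{red} and Lemma~\ref{hyp} to $C_P(H)=1$, handle the ``if'' direction by Proposition~\ref{tfhyp}, and for the ``only if'' direction exhibit a zigzag cycle in the quiver of Proposition~\ref{abelquiv} and invoke Proposition~\ref{zig}. Your cases (1)--(3) are a somewhat finer reorganisation of the paper's cases (i)--(ii), but they cover the same ground; the paper simply starts the alternating $\alpha_{ij}/\alpha_{i'j'}$ zigzag path and takes the first return, noting that for $p\geq 3$ (or for $p=2$ with both indices chosen from some $i\geq 2$, which is possible since $t_i\geq 2$ by Lemma~\ref{p2}) an odd closure is harmless because neither $\alpha_{ij}^2$ nor $\alpha_{i'j'}^2$ occurs in the relations. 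One small point: your case (3) needs \emph{both} $p^i\geq 3$ and $p^{i'}\geq 3$, since the odd cycle you extract may close with $a_1a_n=\alpha_{i'j'}^2$ rather than $\alpha_{ij}^2$; so for $p=2$ you must explicitly pick both generators from a homocyclic summand with $i\geq 2$, not just one of them.

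The genuine gap is your final subcase, $p=2$ with $P\cong C_2^n$, $n\geq 3$. Here you write that ``one locates an even zigzag cycle (of length $4$ or $6$) directly inside the circulant-type subquiver'', but you give no construction and no reason why such a cycle must exist. In fact three arbitrary distinct nontrivial characters need \emph{not} support a length-$4$ cycle (e.g.\ $\omega,\omega^2,\omega^4$ in $C_7$ admit none), so something specific to the situation is required. The paper's key observation---which you do not mention---is that because the $kH$-module $M$ is defined over $\mathbb{F}_2$ (see the matrices $\overline{A^h}$ in the proof of Proposition~\ref{abelquiv}), the Frobenius map $\chi\mapsto\chi^{\otimes 2}$ permutes the set $\{\chi_{11},\dots,\chi_{1t_1}\}$. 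Every Frobenius orbit has size $\geq 2$ (size $1$ would force a trivial character, contradicting Lemma~\ref{noloop}), and then the paper builds an explicit even cycle: if some orbit has size $\geq 3$ one uses $\chi,\chi^{\otimes 2},\chi^{\otimes 4}$ to get a length-$6$ cycle (or a length-$4$ variant when $\chi^{\otimes 5}=1$); if all orbits have size $2$ then $\chi^{\otimes 3}=1$ for each, and two orbits $\{\chi,\chi^{\otimes 2}\}$, $\{\chi',\chi'^{\otimes 2}\}$ give a length-$4$ cycle. Without this Frobenius input your argument in this case is an assertion, not a proof.
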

\begin{proof}
    From Proposition \ref{Asch}, Corollary \ref{red} and Lemma \ref{hyp}, we can assume that $C_P(H)=1$ and $P=R$ by replacing $P$ with $[P,H]$.\\
    \indent In the case of (a) and (b), the group algebra $kG$ is $\tau$-tilting finite by Proposition \ref{tfhyp}.\\
    \indent For cases other than (a) and (b), we show $\tau$-tilting infiniteness of $kG$ by identifying $kG$ with $kQ/I$ in the sense of Proposition \ref{abelquiv} and applying Proposition \ref{zig}. Note that when $p=2$, it holds that $t_i\neq1$ for all $i\geq1$ by Lemma \ref{p2}, and hence it suffices to consider the following cases (i)-(iii).\\
    (i) If $p\geq3$ and $P$ has rank $\geq2$, then we can choose two distinct pairs $(i,j),(i',j')\in\Gamma$. By considering the following zigzag path starting from arbitrary $\lambda\in\mathrm{Irr}\,H$
    $$
    \begin{tikzcd}
        \lambda \rar["\alpha_{ij}"] & \chi_{ij}\otimes\lambda & \chi^*_{i'j'}\otimes\chi_{ij}\otimes\lambda \lar["\alpha_{i'j'}"'] \rar["\alpha_{ij}"] & \cdots,
    \end{tikzcd}
    $$
    we obtain a zigzag cycle of length $\geq2$ with distinct vertices because $Q$ has no loops by Lemma \ref{noloop}. Since neither $\alpha^2_{ij}$ nor $\alpha^2_{i'j'}$ appears in the relations as in Proposition \ref{abelquiv}, $kG$ is $\tau$-tilting infinite by Proposition \ref{zig}.\\
    (ii) If $p=2$ and $t_i\geq2$ for some $i\geq2$, then, by taking distinct pairs $(i,j),(i,j')\in\Gamma$ with $i\geq2$, we can show that $kG$ is $\tau$-tilting infinite by the same argument as in (i). Note that $\alpha_{1j}^2=0$ for any $1\leq j\leq t_1$ by Proposition \ref{abelquiv}, and hence the assumption $i\geq 2$ is crucial.\\
    (iii) If $p=2$, $t_1\geq3$ and $t_i=0$ for all $i\geq2$, then we should consider the action of the Frobenius map on the $kH$-module $M=J_{kP}/J^2_{kP}\cong S_{\chi_{11}}\oplus\cdots\oplus S_{\chi_{1t_1}}$. Since the representation $M$ of $H$ is defined over $\mathbb{F}_2$ (see the proof of Proposition \ref{abelquiv}), the Frobenius map $F:k\ni a\mapsto a^2\in k$ permutes $\chi_{11},\ldots,\chi_{1t_1}$. In other words, there exists $\sigma\in\mathfrak{S}_{t_1}$ such that $F\circ\chi_{1j}=\chi_{1\sigma(j)}$ for all $1\leq j\leq t_1$. Note that it follows that $F\circ\chi_{1j}=\chi^{\otimes 2}_{1j}$ for $1\leq j\leq t_1$. Consider the $F$-orbits under the action on $\{\chi_{11},\ldots,\chi_{1t_1}\}$. \\
    \indent If there exists an $F$-orbit $\{\chi\}$ of size $1$, then $F\circ\chi=\chi^{\otimes 2}=\chi$ implies that $\chi$ is the trivial character, which contradicts the fact that $Q$ has no loops by Lemma \ref{noloop}. Therefore, every $F$-orbit has size $\geq 2$.\\
    \indent If there exists an $F$-orbit $\mathfrak{O}$ of size $\geq3$, then we can take three distinct characters $\chi,\chi^{\otimes2},\chi^{\otimes4}\in\mathfrak{O}$ and consider the following zigzag cycle $\gamma$ starting from arbitrary $\lambda\in\mathrm{Irr}\,H$
    $$
    \begin{tikzcd}
         & \lambda \ar[ld,"\chi^{\otimes2}"'] \ar[rd,"\chi^{\otimes4}"] & \\
        \chi^{\otimes2}\otimes\lambda & & \chi^{\otimes4}\otimes\lambda \\
         \chi\otimes\lambda \ar[u,"\chi"]  \ar[rd,"\chi^{\otimes4}"'] & & \chi^{\otimes3}\otimes\lambda \ar[u,"\chi"]  \ar[ld,"\chi^{\otimes2}"] \\
         & \chi^{\otimes5}\otimes\lambda, &
    \end{tikzcd}
    $$
    where each arrow is labeled by the corresponding character. Since $\mathfrak{O}$ does not contain the trivial character, all the vertices in $\gamma$ are distinct unless $\chi^{\otimes3}$ or $\chi^{\otimes5}$ is the trivial character. Since $\chi\neq\chi^{\otimes4}$, $\chi^{\otimes3}$ cannot be the trivial character. If $\chi^{\otimes5}$ is the trivial character, then we have the following zigzag cycle with distinct vertices
    $$
    \begin{tikzcd}
         & \lambda \ar[ld,"\chi"'] \ar[rd,"\chi^{\otimes4}"] & \\
        \chi\otimes\lambda & & \chi^{\otimes4}\otimes\lambda \\
         & \chi^{\otimes2}\otimes\lambda. \ar[lu,"\chi^{\otimes4}"] \ar[ru,"\chi^{\otimes2}"'] &
    \end{tikzcd}
    $$
    Therefore, if there exists an $F$-orbit $\mathfrak{O}$ of size $\geq3$, then we can show that $kG$ is $\tau$-tilting infinite by considering the above zigzag cycles of even length with distinct vertices by Proposition \ref{zig}.\\
    \indent If the size of every $F$-orbit is 2, then we can take four distinct characters $\chi,\chi^{\otimes2},\chi ',\chi '^{\otimes2}\in\{\chi_{11},\ldots,\chi_{1t_1}\}$. Since $\chi^{\otimes3}$ and $\chi '^{\otimes3}$ are the trivial characters and $\chi '\neq\chi^{\otimes2}=\chi^*$, we have the following zigzag cycle with distinct vertices
    $$
    \begin{tikzcd}
         & \lambda \ar[ld,"\chi"'] \ar[rd,"\chi '"] & \\
        \chi\otimes\lambda & & \chi '\otimes\lambda \\
         & \chi\otimes\chi '\otimes\lambda, \ar[lu,"\chi '^{\otimes2}"'] \ar[ru,"\chi^{\otimes2}"] &
    \end{tikzcd}
    $$
    which shows that $kG$ is $\tau$-tilting infinite by Proposition \ref{zig}.
\end{proof}

\vspace{3mm}

\indent When $P$ is nonabelian, the problem of determining whether $kG$ is $\tau$-tilting finite cannot be reduced to the case $C_P(H)=1$ since $C_P(H)$ is not necessarily normal in $G$. Nonetheless, if $C_P(H)$ is contained in the Frattini subgroup $\Phi(P)$ of $P$, then $\tau$-tilting finiteness of $kG$ is determined by the $p$-hyperfocal subgroup of $G$ except in the case when $p=2$ and $P$ has rank 2.\\

\begin{cor}\label{subcor}
    Let $P$ be a $p$-group, $H$ an abelian $p'$-subgroup acting on $P$ and $G:=P\rtimes H$. Assume that $C_P(H)\leq \Phi(P)$. Then the following hold:
    \begin{enumerate}
        \setlength{\parskip}{0cm}
        \setlength{\itemsep}{0cm}
        \item If $p=2$ and $kG$ is $\tau$-tilting finite, then $P$ is trivial or has rank $2$.
        \item If $p\geq3$ and $kG$ is $\tau$-tilting finite, then $P$ has rank $\leq1$.
    \end{enumerate}
\end{cor}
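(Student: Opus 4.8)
The plan is to reduce to the abelian case already settled in Theorem \ref{mainthm} by passing to the Frattini quotient. First I would observe that $\Phi(P)$ is characteristic in $P$ and hence normal in $G$, since $P\trianglelefteq G$; thus $\overline{G}:=G/\Phi(P)$ is a group, and as $H\cap\Phi(P)\leq H\cap P=1$ we may identify $H$ with its image, so that $\overline{G}\cong\overline{P}\rtimes H$, where $\overline{P}:=P/\Phi(P)$ is elementary abelian. The quotient map $G\twoheadrightarrow\overline{G}$ induces a surjective algebra homomorphism $kG\twoheadrightarrow k\overline{G}$, and since any brick over a quotient algebra $kG/I$ is automatically a brick over $kG$, $\tau$-tilting finiteness of $kG$ implies $\tau$-tilting finiteness of $k\overline{G}$ by Theorem \ref{DIJ}. (This is the same mechanism used in the proof of Proposition \ref{zig}.)

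Next I would pin down the $p$-hyperfocal subgroup of $\overline{G}$. By Proposition \ref{Asch} we have $P=C_P(H)[P,H]$, and the hypothesis $C_P(H)\leq\Phi(P)$ then gives $\Phi(P)[P,H]=P$; since $\Phi(P)$ is contained in every maximal subgroup of $P$, the subgroup $[P,H]$ cannot be proper, so $[P,H]=P$. Passing to $\overline{G}$ yields $[\overline{P},H]=\overline{P}$, and because $\overline{P}$ is abelian, Proposition \ref{Asch} gives $\overline{P}=C_{\overline{P}}(H)\times[\overline{P},H]=C_{\overline{P}}(H)\times\overline{P}$, forcing $C_{\overline{P}}(H)=1$. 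By Lemma \ref{hyp}, the $p$-hyperfocal subgroup of $\overline{G}$ is $[\overline{P},H]=\overline{P}$.

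Finally I would apply Theorem \ref{mainthm} to $\overline{G}=\overline{P}\rtimes H$, whose $p$-hyperfocal subgroup is $\overline{P}$: assuming $kG$, hence $k\overline{G}$, is $\tau$-tilting finite, the theorem forces $\overline{P}$ to be trivial or isomorphic to $C_2\times C_2$ when $p=2$, and trivial or cyclic when $p\geq3$. Since the rank of $P$ equals $\dim_{\mathbb{F}_p}(P/\Phi(P))=\dim_{\mathbb{F}_p}\overline{P}$ by the Burnside basis theorem, and $\overline{P}=1$ is equivalent to $P=1$, the first case gives (a) ($P$ trivial or of rank $2$) and the second gives (b) ($P$ of rank $\leq1$).

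I do not anticipate a genuine obstacle: the argument is essentially the reduction $C_P(H)\leq\Phi(P)\Rightarrow[P,H]=P$, together with descent of $\tau$-tilting finiteness along the surjection $kG\twoheadrightarrow k(G/\Phi(P))$ and a direct appeal to Theorem \ref{mainthm}. The only point needing a little care is the Frattini-subgroup bookkeeping — that $\Phi(P)[P,H]=P$ implies $[P,H]=P$, and that this equality persists in the quotient $\overline{P}$ — which follows from the non-generator characterization of $\Phi(P)$.
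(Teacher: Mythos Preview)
Your argument is correct and follows essentially the same route as the paper: pass to the Frattini quotient $\overline{G}=(P/\Phi(P))\rtimes H$, use the surjection $kG\twoheadrightarrow k\overline{G}$ to inherit $\tau$-tilting finiteness, identify the $p$-hyperfocal subgroup of $\overline{G}$ as $\overline{P}$, and apply Theorem \ref{mainthm} together with $\mathrm{rank}\,P=\mathrm{rank}\,\overline{P}$. The only cosmetic difference is that the paper obtains $C_{\overline{P}}(H)=1$ directly from the coprime-action formula $C_{P/\Phi(P)}(H)=C_P(H)\Phi(P)/\Phi(P)$ (Aschbacher (18.7)(4)), whereas you deduce it via the Frattini non-generator step $[P,H]=P$; both are valid and equally short.
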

\begin{proof}
    (a) Consider the surjection of group algebras $kG\twoheadrightarrow k[G/\Phi(P)]=k[(P/\Phi(P))\rtimes H]$. By \cite[(18.7)-(4)]{Asch} and the assumption, we have $C_{P/\Phi(P)}(H)=C_P(H)\Phi(P)/\Phi(P)=1$. Hence, the $p$-hyperfocal subgroup of $(P/\Phi(P))\rtimes H$ is $P/\Phi(P)$ by Proposition \ref{Asch} and Lemma \ref{hyp}. Thus, if $kG$ is $\tau$-tilting finite, then so is $k[(P/\Phi(P))\rtimes H]$, which implies that $P/\Phi(P)$ is trivial or isomorphic to $C_2\times C_2$ by Theorem \ref{mainthm}. Since $P/\Phi(P)$ has the same rank as $P$, the assertion follows.\\
    (b) follows from the same argument as in (a).
\end{proof}

\vspace{3mm}

\begin{cor}\label{maincor}
    Let $P$ be a $p$-group, $H$ an abelian $p'$-subgroup acting on $P$, $G:=P\rtimes H$ and $R$ the $p$-hyperfocal subgroup of $G$. Assume that $C_P(H)\leq \Phi(P)$. Then the following hold:
    \begin{enumerate}
        \setlength{\parskip}{0cm}
        \setlength{\itemsep}{0cm}
        \item When $p=2$ and $P$ has rank $\neq2$, $kG$ is $\tau$-tilting finite if and only if $R$ is trivial.
        \item When $p\geq3$, $kG$ is $\tau$-tilting finite if and only if $R$ has rank $\leq1$.
    \end{enumerate}
\end{cor}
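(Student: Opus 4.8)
The plan is to reduce everything to Corollary~\ref{subcor} by first observing that, under the hypothesis $C_P(H)\leq\Phi(P)$, the $p$-hyperfocal subgroup $R$ coincides with all of $P$. Indeed, Lemma~\ref{hyp} gives $R=[P,H]$, while Proposition~\ref{Asch} gives $P=C_P(H)[P,H]$; since $C_P(H)\leq\Phi(P)$ we obtain $P=\Phi(P)\cdot R$, and because $\Phi(P)$ consists of non-generators this forces $P=R$. Consequently $R$ and $P$ have the same rank, so the rank conditions on $R$ in the statement are exactly the rank conditions on $P$ that appear in Corollary~\ref{subcor}. I would record the equality $R=P$ as a one-line preliminary observation (or simply fold it into the bookkeeping below).

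With this in hand, the ``only if'' directions are immediate. Assuming $kG$ is $\tau$-tilting finite, Corollary~\ref{subcor}(a) says $P$ is trivial or of rank $2$; under the extra hypothesis that $P$ has rank $\neq 2$ this leaves only $P=R=1$. Similarly, in the case $p\geq 3$, Corollary~\ref{subcor}(b) gives $\mathrm{rank}\,R=\mathrm{rank}\,P\leq 1$. For the ``if'' directions I would argue as follows: if $R=P$ is trivial then $G=H$, so $kG=kH$ is semisimple (as $H$ is a $p'$-group) and hence $\tau$-tilting finite; if $p\geq 3$ and $R=P$ has rank $\leq 1$, then $P$ is trivial (already covered) or cyclic, and in the cyclic case $R$ is cyclic, so $kG$ is $\tau$-tilting finite by Proposition~\ref{tfhyp}(a). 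Assembling these four implications finishes both (a) and (b).

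The whole argument is just a matter of combining Lemma~\ref{hyp}, Proposition~\ref{Asch}, Corollary~\ref{subcor} and Proposition~\ref{tfhyp}, so I do not expect a genuine obstacle. The only point that deserves care is the first step --- recognizing that the Frattini condition collapses the hyperfocal subgroup onto the whole $p$-group --- together with keeping straight the conventions that ``rank $\leq 1$'' includes the trivial group and that a group algebra of a $p'$-group is semisimple (hence $\tau$-tilting finite). Each of the four implications can then be dispatched in a couple of lines.
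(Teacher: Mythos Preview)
Your proof is correct and follows the paper's approach of deducing the ``if'' directions from Proposition~\ref{tfhyp} and the ``only if'' directions from Corollary~\ref{subcor}. Your preliminary observation that $R=P$ under the Frattini hypothesis is not strictly needed for the argument (since $R\leq P$ already forces $R$ to be trivial or cyclic once $P$ is), but it is exactly what the paper records in Remark~\ref{p2rank2} immediately after this corollary.
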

\begin{proof}
    The assertions follow immediately from Proposition \ref{tfhyp} and Corollary \ref{subcor}.
\end{proof}

\vspace{3mm}

\begin{rem}\label{p2rank2}
    Notice that we can replace $R$ with $P$ in Corollary \ref{maincor}, which says that $\tau$-tilting finiteness of $kG$ is determined by the rank of $P$ (or $R$) except in the case when $p=2$ and $P$ has rank 2. However, even if $P$ is abelian, $kG$ can be $\tau$-tilting finite or infinite when $p=2$ and $P$ has rank 2. For example, let $p=2$,
    $$G_1:=(C_2\times C_2)\rtimes C_3=\langle a,b,c\mid a^2=b^2=c^3=1,ab=ba,cac^{-1}=b,cbc^{-1}=ab\rangle,\,\mathrm{and}$$
    $$G_2:=(C_4\times C_4)\rtimes C_3=\langle a,b,c\mid a^4=b^4=c^3=1,ab=ba,cac^{-1}=b,cbc^{-1}=ab\rangle.$$
    We can easily check that both groups are centerless, and hence the $p$-hyperfocal subgroups of $G_1$ and $G_2$ are $C_2\times C_2$ and $C_4\times C_4$, respectively, by Proposition \ref{Asch} and Lemma \ref{hyp}. Thus, by Theorem \ref{mainthm}, $kG_1$ is $\tau$-tilting finite but $kG_2$ is $\tau$-tilting infinite.\\
\end{rem}

\section*{Acknowledgements}
The authors would like to thank Shigeo Koshitani for useful discussions. The first author was supported by JST SPRING, Grant Number JPMJSP2110.\\

\end{document}